\newcommand \reg{\operatorname{reg}}
\newcommand \cmdef{\operatorname{cmdef}}
\newcommand \Tor{\operatorname{Tor}}
\newcommand \depth{\operatorname{depth}}
\newcommand \pd{\operatorname{pd}}
\newcommand{\KK}{\mathbb{K}}
\newcommand{\sk}{{\ensuremath{\sf k }}}
\newtheorem{theorem}{Theorem}[section]
\newtheorem{lemma}[theorem]{Lemma}
\newtheorem{proposition}[theorem]{Proposition}
\newtheorem{example}[theorem]{Example}
\newtheorem{question}[theorem]{Question}
\newtheorem{remark}[theorem]{Remark}
\newtheorem{corollary}[theorem]{Corollary}
\begin{document}
\title{DEPTH AND EXTREMAL BETTI NUMBER OF BINOMIAL EDGE IDEALS}
\author{Arvind Kumar}
\email{arvkumar11@gmail.com}

\author{Rajib Sarkar}
\email{rajib.sarkar63@gmail.com}
\address{Department of Mathematics, Indian Institute of Technology
Madras, Chennai, INDIA - 600036}

\begin{abstract}
Let $G$ be a simple graph on the vertex set $[n]$ and $J_G$ be the corresponding binomial edge ideal.
Let $G=v*H$ be the cone of $v$ on $H$. In this article, we compute all the Betti numbers of $J_G$ in terms of Betti numbers of
$J_H$ and as a consequence, we get the Betti diagram of wheel graph. Also, we study Cohen-Macaulay defect of $S/J_G$ 
in terms of Cohen-Macaulay defect of $S_H/J_H$ and using this  we construct a graph with Cohen-Macaulay defect $q$ for any $q\geq 1$. We obtain the depth of binomial edge ideal of join of graphs. Also, we prove that for
any pair $(r,b)$ of positive integers with $1\leq b< r$, there exists a connected graph $G$ such that $\reg(S/J_G)=r$
and the number of extremal Betti numbers of $S/J_G$ is $b$.
\end{abstract}
\keywords{Binomial edge ideal, Castelnuovo-Mumford regularity, Join of graphs,  Depth, Extremal Betti number}
\thanks{AMS Subject Classification (2010): 13D02,13C13, 05E40}
\maketitle

\section{Introduction}
Let $R = \KK[x_1,\ldots,x_m]$ be the polynomial ring over an arbitrary
field $\KK$ and $M$ be a finitely generated graded  $R$-module. 
Let
\[
0 \longrightarrow \bigoplus_{j \in \mathbb{Z}} R(-p-j)^{\beta_{p,p+j}(M)} 
{\longrightarrow} \cdots {\longrightarrow} \bigoplus_{j \in \mathbb{Z}} R(-j)^{\beta_{0,j}(M)} 
{\longrightarrow} M\longrightarrow 0,
\]
be the minimal graded free resolution of $M$, where 
$R(-j)$ is the free $R$-module of rank $1$ generated in degree $j$. The number $\beta_{i,i+j}^R(M)$ is  the 
$(i,i+j)$-th graded Betti number of $M$. The
projective dimension  and Castelnuovo-Mumford regularity are two invariants associated with
$M$ that can be read off from the minimal free resolution. The 
Castelnuovo-Mumford regularity of $M$, denoted by $\reg(M)$, is defined as 
\[
\reg(M):=\max \{j : \beta_{i,i+j}^R(M) \neq 0\}
\] and the projective dimension of $M$, denoted by $\pd_R(M)$, is defined as \[\pd_R(M):=\max\{i : \beta_{i,i+j}^R(M) \neq 0\}.\]
A nonzero graded Betti number $\beta_{i,i+j}^R(M)$ is called an \textit{extremal Betti number}, if $\beta_{r,r+s}^R(M)=0$ 
for all pairs $(r,s)\neq (i,j)$ with $r\geq i$ and $s\geq j$. Observe that the extremal Betti number is unique if and only if 
$\beta_{p,p+r}^R(M)\neq 0$, where $p =\pd_R(M)$ and $r=\reg(M)$.

Let $G$ be a  simple graph on  $V(G)=\{1,2,\dots,n\}$
and edge set $E(G)$. Let $S=\KK[x_1,\dots,x_n,y_1,\dots,y_n]$ be the polynomial ring over an arbitrary
field $\KK$. The ideal $J_{G}$ generated by the binomials $x_iy_j-x_jy_i$, where $i<j$ and $\{i,j\}\in E(G)$, is known as the
binomial edge ideal of $G$. The notion of binomial edge ideal 
was introduced by Herzog et.al. in \cite{HH1} and independently by Ohtani in \cite{oh}. Algebraic properties and
invariants of binomial edge ideals have been studied by many authors, see \cite{EZ,ger1,KM1}. In particular, establishing a relationship between Castelnuovo-Mumford regularity (simply regularity), projective dimension, Hilbert series of binomial edge ideals and combinatorial invariants associated with graphs is an active area of 
research, see \cite{AN2017, JACM,KM4,FM,RKM}. In general, the
algebraic invariants such as regularity and depth of $J_{G}$ are hard
to compute. There are bounds known for the regularity and depth of
binomial edge ideals, see \cite{AN2017,MM}. The maximal possible depth
of binomial edge ideal of a connected graph on $n$ vertices is
$n+1$ (see \cite[Theorems 3.19, 3.20]{AN2017}). Also, if $G$ is a
connected graph on $n$ vertices such that $S/J_G$ is Cohen-Macaulay,
then $\depth_{S}(S/J_G)=n+1$. In \cite{Alba-MathNatch}, de Alba and Hoang
studied the depth of some subclass of closed graphs. However not much
more is known about the depth of binomial edge ideal. For an ideal $I
\subset S$, the Cohen-Macaulay defect of $S/I$ is defined to be
$\cmdef(S/I) := \dim(S/I) - \depth_S(S/I)$. We study the depth and
Cohen-Macaulay defect of $S/J_G$, where $G$ is a cone of $v$ on a
graph $H$, denoted by $v * H$ (for definition see section 3). We show that the depth remains invariant under the process
of taking cone on connected graph, (Theorem \ref{cone-depth}). As a
consequence, we prove that for any positive integer $q$, there exists
a graph having Cohen-Macaulay defect equal to $q$, (Corollary
\ref{cone-cmdeff}). We also compute the depth of $S/J_{v*H}$, when $H$
is a disconnected graph, (Theorem \ref{cone-depth1}).

Another homological invariant that helps in understanding more about
its structure is the Betti number. There have been few attempts in
computing the Betti numbers of binomial edge ideals, for example,
Zafar and Zahid for cycles, \cite{SZ}, Schenzel and Zafar for complete
bipartite graphs, \cite{Schenzel}, Jayanthan et al.  for trees and
unicyclic graphs \cite{JAR}. Extremal Betti numbers of binomial edge
ideals of closed graphs were studied by de Alba and Hoang in
\cite{Alba-MathNatch}. In \cite{HR-Extremal}, Herzog and Rinaldo
studied extremal Betti number of binomial edge ideal of block graphs.
We compute all the Betti numbers of cone of a graph, (Theorem
\ref{Betti-cone}). As a consequence, we obtain the  Betti numbers of
binomial edge ideal of wheel graph, (Corollary \ref{Betti-wheel}).

We then consider a more general form of cone, namely the join
product of two arbitrary graphs. Given two graphs
$G_1$ and $G_2$, it is interesting to understand the properties of
$G_1*G_2$ (for definition see section 4) in terms of the corresponding
properties of $G_1$ and $G_2$. In \cite{KMJA}, Kiani and Saeedi Madani
studied the regularity of  $J_{G_1*G_2}$.  We computed the Hilbert
series of binomial edge ideal of $G_1*G_2$ in terms of the Hilbert
series of $J_{G_1}$ and $J_{G_2}$, \cite{AR1}. In this article, we
study the depth of  $S/J_{G_1*G_2}$, (Theorems \ref{depththm2},
\ref{depththm1}, \ref{depththm3}).  As a consequence, we obtain the
depth of complete multipartite graphs, (Corollary
\ref{com-bipartite}).

Recently, researchers are trying to construct graphs such that their
corresponding edge ideals satisfy certain algebraic properties.  For a
given pair of positive integers $(r,s)$, Hibi and Matsuda in
\cite{HMmono} showed the existence of monomial ideal $I_{r,s}$ such
that $\reg(S/I_{r,s})=r$ and the degree of $h$-polynomial of
$S/I_{r,s}$ is $s$. In \cite{HM-Extremal}, Hibi et al. constructed a
graph $G$ such that for $1 \leq b \leq r$, the regularity of the
monomial edge ideal of $G$ is $r$ and the number of its extremal Betti
numbers is $b$. Given a pair $(r,s)$ with $1\leq r\leq s$, Hibi and
Matsuda constructed a graph $G$  such that $\reg(S/J_G)=r$ and the
degree of $h$-polynomial of $S/J_G$ is  $s$, \cite{HibiM2018}.
In this article, we construct  a graph $G$ such that $\reg(S/J_G)=r$
and the number of extremal Betti numbers of $S/J_G$ is $b$, for $1\leq
b< r$ (Theorem \ref{exist}). 
\section{Preliminaries}
In this section, we  recall  some notation and fundamental results on
graphs and their corresponding binomial edge ideals.

Let $G$  be a  finite simple graph with vertex set $V(G)$ and edge set
$E(G)$. For  $A \subseteq V(G)$, $G[A]$ denotes the induced
subgraph of $G$ on the vertex set $A$, that is, for $i, j \in A$, $\{i,j\} \in E(G[A])$ if and only if $ \{i,j\} \in E(G)$. 
For a vertex $v$, $G \setminus v$ denotes the  induced subgraph of $G$
on the vertex set $V(G) \setminus \{v\}$. A vertex $v \in V(G)$ is
said to be a \textit{cut vertex} if $G \setminus v$ has  more  components than $G$.  We say that $G$ is $k$ \textit{vertex-connected} if $k <n$ and for every 
$A \subset [n]$ with $|A| <k$, the induced graph $G[ \bar{A}]$ is connected, where $\bar{A}=[n]\setminus A$. The \textit{vertex connectivity} of a connected  graph $G$, denoted by $\kappa(G)$, is defined as the maximum positive integer $k$ such that $G$ is  $k$ vertex-connected. 
A subset $U$ of $V(G)$ is said to be a 
\textit{clique} if $G[U]$ is a complete graph. We denote the number of cliques of cardinality $i$ in $G$ by $\sk_i(G)$. A vertex $v$ is said to be a \textit{simplicial vertex} if it belongs to exactly one maximal clique.  For a vertex $v$, $N_G(v) = \{u \in V(G) ~
: ~ \{u,v\} \in E(G)\}$ denotes neighborhood of $v$ and  $G_v$ is the
graph on the vertex set $V(G)$ and edge set $E(G_v) =E(G) \cup \{
\{u,w\}: u,w \in N_G(v)\}$. A component  of $G$ is said to be a \textit{nontrivial} component if it has atleast one edge.

For  $T \subset [n]$, let $\bar{T} = [n]\setminus T$ and $c_T$
denote the number of connected components of $G[\bar{T}]$. Let $G_1,\cdots,G_{c_T}$ be the connected 
components of $G[\bar{T}]$. For each $i$, let $\tilde{G_i}$ denote the complete graph on $V(G_i)$ and
$P_T(G) = (\underset{i\in T} \cup \{x_i,y_i\}, J_{\tilde{G_1}},\cdots, J_{\tilde{G}_{c_T}}).$
It was shown by Herzog et al.  that $J_G =  \underset{T \subseteq [n]}\cap P_T(G)$, \cite[Theorem 3.2]{HH1}.
For each $i \in T$, if $i$ is a cut vertex of the graph $G[\bar{T} \cup \{i\}]$,
then we say that $T$ has the cut point property. Set $\mathcal{C}(G) =\{\emptyset \}
\cup \{ T: T \; \text{has the cut point property} \}$. It follows from  \cite[Corollary 3.9]{HH1} 
that $T \in \mathcal{C}(G)$ if and only if $P_T(G)$ is a minimal prime of $J_G$. It follows from the Auslander-Buchsbaum formula that $\depth_S(S/J_G)=2n-\pd_S(S/J_G)$.

The following basic property of depth is used repeatedly in this
article.

\begin{lemma}\label{depth-lemma}
	Let $S$ be a standard graded polynomial ring  and $M,N$ and $P$ be finitely generated graded $S$-modules. 
	If $ 0 \rightarrow M \xrightarrow{f}  N \xrightarrow{g} P \rightarrow 0$ is a 
	short exact sequence with $f,g$  
	graded homomorphisms of degree zero, then 
	\begin{enumerate}
		\item[(i)] $\depth_S(M) \ge \min \{\depth_S(N), \depth_S(P)+1\},$
		\item[(ii)] $\depth_S(M) = \depth_S(P)+1$ if  $\depth_S(N) > \depth_S(P)$,
		\item[(iii)] $\depth_S(M) = \depth_S(N)$ if  $\depth_S(N) < \depth_S(P)$.
	\end{enumerate}	
\end{lemma}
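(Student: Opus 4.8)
The plan is to prove all three parts at once by reading them off the long exact sequence of local cohomology. Write $\mathfrak{m}$ for the graded maximal ideal of $S$ and recall the Grothendieck characterization $\depth_S(L) = \min\{i : H^i_{\mathfrak{m}}(L) \neq 0\}$, valid for every finitely generated graded $S$-module $L$. Applying $H^{\bullet}_{\mathfrak{m}}(-)$ to the short exact sequence $0 \to M \to N \to P \to 0$ produces
\[
\cdots \to H^{i-1}_{\mathfrak{m}}(P) \to H^i_{\mathfrak{m}}(M) \to H^i_{\mathfrak{m}}(N) \to H^i_{\mathfrak{m}}(P) \to H^{i+1}_{\mathfrak{m}}(M) \to \cdots,
\]
and I will set $a = \depth_S(M)$, $b = \depth_S(N)$, $c = \depth_S(P)$ throughout. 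For part (i), put $t = \min\{b, c+1\}$. For each $i < t$ one has $i < b$, hence $H^i_{\mathfrak{m}}(N) = 0$, and $i-1 < c$, hence $H^{i-1}_{\mathfrak{m}}(P) = 0$; the segment $H^{i-1}_{\mathfrak{m}}(P) \to H^i_{\mathfrak{m}}(M) \to H^i_{\mathfrak{m}}(N)$ then squeezes $H^i_{\mathfrak{m}}(M) = 0$, so $a \geq t$.

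For part (ii) I assume $b > c$, so that $b \geq c+1$ and part (i) already gives $a \geq c+1$. To get the opposite inequality I focus on the window $H^c_{\mathfrak{m}}(N) \to H^c_{\mathfrak{m}}(P) \to H^{c+1}_{\mathfrak{m}}(M)$: the left term vanishes because $c < b$, while $H^c_{\mathfrak{m}}(P) \neq 0$ by the definition of $c$, so the second map is injective and therefore $H^{c+1}_{\mathfrak{m}}(M) \neq 0$, giving $a \leq c+1$ and hence $a = c+1$. For part (iii) I assume $b < c$; then $b < c+1$ as well, so part (i) yields $a \geq b$. Here the relevant window is $H^{b-1}_{\mathfrak{m}}(P) \to H^b_{\mathfrak{m}}(M) \to H^b_{\mathfrak{m}}(N) \to H^b_{\mathfrak{m}}(P)$, in which both $P$-terms vanish since $b-1 < b < c$; thus $H^b_{\mathfrak{m}}(M) \cong H^b_{\mathfrak{m}}(N) \neq 0$, forcing $a = b$.

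The argument is, in the end, bookkeeping on a single long exact sequence, so there is no substantial conceptual obstacle. The only place demanding care is the tracking of boundary indices in the three windows, together with invoking the nonvanishing of $H^c_{\mathfrak{m}}(P)$ in part (ii) and of $H^b_{\mathfrak{m}}(N)$ in part (iii) at precisely the correct cohomological degree; one could equally well run the whole proof through $\operatorname{Ext}^{\bullet}_S(S/\mathfrak{m}, -)$ using the dual characterization of depth, but the local cohomology version keeps the vanishing ranges most transparent.
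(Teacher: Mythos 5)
Your proof is correct: the paper states Lemma \ref{depth-lemma} as a standard fact and gives no proof of its own, so there is nothing to compare against, and your argument is the standard one — the three windows of the local cohomology long exact sequence are indexed correctly, and Grothendieck's characterization of depth (vanishing of $H^i_{\mathfrak{m}}$ for $i<\depth$, nonvanishing at $i=\depth$) is invoked at exactly the right degrees in (ii) and (iii). The only pedantic caveat is the convention $\depth_S(0)=\infty$, needed so the characterization and the stated inequalities survive when one of $M$, $N$, $P$ is the zero module; with that understood, the proof is complete, and the alternative route through $\operatorname{Ext}^{\bullet}_S(S/\mathfrak{m},-)$ that you mention is an equally valid formulation of the same bookkeeping.
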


\section{Binomial edge ideal of cone of a graph}
In this section, we study the binomial edge ideal of cone of a graph.  
Let $H$ be a graph on the vertex set $[n]$. The cone of $v$ on $H$, denoted by $v*H$, is the  graph with
the vertex set $V(v*H)=V(H)\sqcup \{v\}$ and edge set $E(v*H)=E(H)\sqcup \{ \{v,u\} \mid u\in V(H)\}.$ From now, we assume
that $H$ is not a complete graph. Set $G=v*H$,  $S_H=\mathbb{K}[x_i,y_i:i \in V(H)]$ and $S=S_H[x_v,y_v]$. First, we 
recall a lemma due to Ohtani which is useful in this section.

\begin{lemma}(\cite[Lemma 4.8]{oh})\label{ohtani-lemma}
	Let $G$ be a  graph on $V(G)$ and $v\in V(G)$ such that $v$ is not a simplicial vertex. Then $J_{G}=(J_{G\setminus v}+(x_v,y_v))\cap J_{G_v}$.
\end{lemma}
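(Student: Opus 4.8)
The plan is to read the identity off the decomposition $J_G = \bigcap_{T \subseteq [n]} P_T(G)$ recorded above (\cite[Theorem 3.2]{HH1}), by splitting the intersection according to whether or not $v \in T$. The inclusion ``$\subseteq$'' is immediate on generators: a generator $x_iy_j - x_jy_i$ of $J_G$ lies in $J_{G_v}$ because $E(G) \subseteq E(G_v)$, and it lies in $J_{G\setminus v} + (x_v, y_v)$ because it belongs to $J_{G\setminus v}$ if $v \notin \{i,j\}$ and to $(x_v,y_v)$ if $v \in \{i,j\}$. So the real content is the reverse inclusion, which I would obtain by evaluating the two halves of the split intersection.

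First I would show $\bigcap_{T \ni v} P_T(G) = (x_v,y_v) + J_{G\setminus v}$. Writing $T = T' \sqcup \{v\}$, the complement $\bar{T}$ is contained in $V(H)$ and $G[\bar{T}] = (G\setminus v)[\bar{T}]$, so $P_T(G) = (x_v,y_v) + P_{T'}(G\setminus v)$. Since $(x_v,y_v)$ involves variables disjoint from those occurring in the ideals $P_{T'}(G\setminus v)$, intersection distributes over the sum, and as $T'$ runs over all subsets of $V(H)$ we obtain $(x_v,y_v) + \bigcap_{T'} P_{T'}(G\setminus v) = (x_v,y_v) + J_{G\setminus v}$.

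Next I would show $\bigcap_{T \not\ni v} P_T(G) = J_{G_v}$, which is the crux. This rests on two observations. For $T \not\ni v$ one has $P_T(G) = P_T(G_v)$: the only edges of $G_v$ absent from $G$ lie inside $N_G(v)$, and since $v \in \bar{T}$ is adjacent to every vertex of $N_G(v) \cap \bar{T}$, those vertices already lie in a single connected component of $G[\bar{T}]$; hence adding the edges inside $N_G(v)$ leaves the components of $G[\bar{T}]$ and $G_v[\bar{T}]$ identical, and the ideals $P_T$ agree. On the other hand, $v$ is a \emph{simplicial} vertex of $G_v$, because completing $N_G(v)$ to a clique makes $\{v\} \cup N_G(v)$ the unique maximal clique containing $v$; a simplicial vertex is never a cut vertex of an induced subgraph, so for every $T \ni v$ deleting $v$ from its component of $G_v[\overline{T\setminus\{v\}}]$ keeps that component connected, whence $P_{T\setminus\{v\}}(G_v) \subseteq P_T(G_v)$ and the prime $P_T(G_v)$ is redundant. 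Therefore $J_{G_v} = \bigcap_T P_T(G_v) = \bigcap_{T\not\ni v} P_T(G_v) = \bigcap_{T\not\ni v} P_T(G)$. Intersecting the two halves gives $J_G = \big(J_{G\setminus v} + (x_v,y_v)\big) \cap J_{G_v}$.

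I expect the redundancy argument to be the main obstacle: one has to verify cleanly that completing $N_G(v)$ makes $v$ simplicial and then convert simpliciality into the inclusion $P_{T\setminus\{v\}}(G_v) \subseteq P_T(G_v)$ by tracking how the component $C$ of $\bar{T}$ containing $v$ behaves under deletion of $v$, using $J_{\tilde{C}} \subseteq (x_v,y_v) + J_{\widetilde{C\setminus v}}$. The hypothesis that $v$ is not simplicial in $G$ is exactly what makes $G_v \neq G$, so that the identity is not vacuous; the argument above is insensitive to it.
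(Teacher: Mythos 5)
Your proof is correct, but there is nothing in the paper to compare it against: the authors quote this lemma verbatim from Ohtani's paper and give no proof, so the relevant comparison is with the original source. Your route---splitting the decomposition $J_G=\bigcap_{T\subseteq [n]}P_T(G)$ of Herzog et al.\ according to whether $v\in T$, computing $\bigcap_{T\ni v}P_T(G)=(x_v,y_v)+J_{G\setminus v}$ via distributivity of intersection over the variable ideal $(x_v,y_v)$, and computing $\bigcap_{T\not\ni v}P_T(G)=J_{G_v}$ from the two observations that $P_T(G)=P_T(G_v)$ when $v\notin T$ (the new edges of $G_v$ join vertices already lying in the component of $v$ in $G[\bar T]$) and that $v$ is simplicial in $G_v$, which makes every $P_T(G_v)$ with $v\in T$ redundant through $P_{T\setminus\{v\}}(G_v)\subseteq P_T(G_v)$ and $J_{\tilde C}\subseteq (x_v,y_v)+J_{\widetilde{C\setminus v}}$---is sound at every step, including the edge case where $v$ is isolated in $G_v[\bar T\cup\{v\}]$, where the inclusion is immediate. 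Ohtani's original argument is different in character: it is a direct generator-level computation, resting on the identity $x_v(x_uy_w-x_wy_u)=x_u(x_vy_w-x_wy_v)-x_w(x_vy_u-x_uy_v)$ for $u,w\in N_G(v)$, which yields $(x_v,y_v)J_{G_v}\subseteq J_G$, after which an arbitrary element of the right-hand intersection is massaged into $J_G$; it needs no primary decomposition at all. Your approach buys conceptual clarity---it exhibits the identity as a statement about how the minimal primes of $J_G$ sort themselves between $J_{G\setminus v}+(x_v,y_v)$ and $J_{G_v}$, which meshes well with how the paper actually uses the lemma---at the cost of invoking the full intersection theorem of Herzog et al.\ as input, whereas Ohtani's computation is elementary and self-contained. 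Your closing remark is also accurate: the identity holds even when $v$ is simplicial, since then $G_v=G$ and the statement degenerates to the inclusion $J_G\subseteq J_{G\setminus v}+(x_v,y_v)$, which is clear on generators; the hypothesis only ensures the lemma is not vacuous.
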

One can see that if $G=v*H$, then $ {G_v}=K_{n+1}$, $G_v \setminus v= K_n$ and $G \setminus v=H$. Therefore, $
(x_v,y_v)+J_{G\setminus v}+J_{G_v}=(x_v,y_v)+J_{K_n}$. Thus, by Lemma  \ref{ohtani-lemma}, we have the following short exact sequence:
\begin{align}\label{cone-ses}
	0\longrightarrow \frac{S}{J_{G}}\longrightarrow 
	\frac{S}{(x_v,y_v)+J_H} \oplus \frac{S}{J_{K_{n+1}}}\longrightarrow \frac{S}{(x_v,y_v)+J_{K_{n}}} \longrightarrow 0.
\end{align}

\begin{remark}\label{pd-lower}
	It follows from \cite[Theorem 1.1]{her1} that if $G$ is a complete graph on $[n]$, then $S/J_G$ is Cohen-Macaulay of dimension $n+1$. 
	If $G$ is a  connected graph which is not a complete graph, then $\kappa(G)\geq1$. Therefore, by \cite[Theorems 3.19, 3.20]{AN2017}, we get  
	$\pd_S(S/J_G)\geq n-2+\kappa(G)\geq n-1$. Thus, for any connected graph $G$,  $\pd_S(S/J_G)\geq n-1$ and hence, by Auslander-Buchsbaum formula, $\depth_{S}(S/J_{G}) \leq n+1$.
\end{remark}

We proceed to prove the following lemma which plays an important role. 
\begin{lemma}\label{linearstand-binomial}
	Let $G$ be a connected graph on the vertex set $[n]$. Let $p=\pd_S(S/J_{G})$. Then  $\beta_{p,p+1}^S(S/J_G)\neq 0$ if and 
	only if $G$ is a complete graph. Moreover, if $\beta_{i,i+2}^S(S/J_G) $ is an extremal Betti number, then $i=p$.
\end{lemma}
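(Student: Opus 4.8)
The plan is to prove the stated equivalence first and then deduce the \emph{Moreover} assertion from it. Since $J_G$ is generated in degree $2$, we have $\beta_{i,i+j}^S(S/J_G)=0$ whenever $i\geq 1$ and $j\leq 0$; in particular $\beta_{p,p}^S(S/J_G)=0$, so the only way homological degree $p$ can be linear is through $\beta_{p,p+1}^S(S/J_G)$. For the direction ``$G$ complete $\Rightarrow \beta_{p,p+1}\neq 0$'', I would invoke that $J_{K_n}$ is the ideal of $2\times 2$ minors of a generic $2\times n$ matrix, whose minimal free resolution is the (linear) Eagon--Northcott complex; hence $\reg(S/J_{K_n})=1$ and the terminal Betti number $\beta_{p,p+1}^S(S/J_{K_n})=\beta_{n-1,n}^S(S/J_{K_n})$ is nonzero. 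This also records the facts, used below, that $S/J_{K_n}$ is Cohen--Macaulay and that a complete graph has a linear resolution, so that $\beta_{i,i+j}^S(S/J_{K_n})=0$ for all $j\geq 2$.

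For the converse ``$\beta_{p,p+1}\neq 0 \Rightarrow G$ complete'', I would argue the contrapositive through the known characterisation that, for a connected graph, $\reg(S/J_G)=1$ if and only if $G$ is complete; equivalently, if $G$ is not complete then $\reg(S/J_G)\geq 2$. It then suffices to show that $\beta_{p,p+1}^S(S/J_G)\neq 0$ forces $\reg(S/J_G)=1$. By the Auslander--Buchsbaum formula $t:=\depth_S(S/J_G)=2n-p$, and the degrees of the minimal generators of $\operatorname{Ext}^{p}_S(S/J_G,S)$ are exactly $\{-(p+j):\beta_{p,p+j}^S(S/J_G)\neq 0\}$; thus $\beta_{p,p+1}\neq 0$ says this $\operatorname{Ext}$ module has a generator in the top degree $-(p+1)$. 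Via graded local duality $\operatorname{Ext}^{p}_S(S/J_G,S)$ is the Matlis dual of the deepest local cohomology module $H^{t}_{\mathfrak m}(S/J_G)$ (up to the twist by $2n$), so the hypothesis pins down the extreme graded component of $H^{t}_{\mathfrak m}(S/J_G)$. The main obstacle is precisely here: one must upgrade this single piece of information about $H^{t}_{\mathfrak m}$ to the statement that \emph{all} $H^{k}_{\mathfrak m}(S/J_G)$ vanish above degree $1-k$, i.e.\ that $\reg(S/J_G)=1$. This is not formal for an arbitrary module, and I expect it to require the structure special to binomial edge ideals --- e.g.\ the description of the linear strand of $J_G$ through a co-chordal cover, or an induction that peels off a vertex and exploits the short exact sequence \eqref{cone-ses} --- to rule out the mixed situation in which $\beta_{p,p+1}\neq0$ coexists with higher Betti numbers in homological degree $p$.

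Granting the equivalence, the \emph{Moreover} statement is a short formal consequence. Suppose $\beta_{i,i+2}^S(S/J_G)$ is an extremal Betti number and, for contradiction, that $i<p$. By the definition of extremality, $\beta_{r,r+s}^S(S/J_G)=0$ for every $(r,s)\neq(i,2)$ with $r\geq i$ and $s\geq 2$; applying this at $r=p>i$ gives $\beta_{p,p+s}^S(S/J_G)=0$ for all $s\geq 2$. Combined with $\beta_{p,p}^S(S/J_G)=0$ and the fact that $\beta_{p,p+j}^S(S/J_G)\neq 0$ for some $j$ (as $p=\pd_S(S/J_G)$), this forces $\beta_{p,p+1}^S(S/J_G)\neq 0$. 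By the equivalence just proved, $G$ is then complete, whence $J_G$ has a linear resolution and $\beta_{k,k+s}^S(S/J_G)=0$ for all $s\geq 2$ and all $k$ --- contradicting the hypothesis that $\beta_{i,i+2}^S(S/J_G)$ is a nonzero (extremal) Betti number. Therefore $i=p$, as claimed.
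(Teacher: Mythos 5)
Your forward direction and your deduction of the \emph{Moreover} clause are both sound, and the latter is essentially the paper's own argument: if $i<p$, extremality at $(i,2)$ kills $\beta_{p,p+j}^S(S/J_G)$ for all $j\geq 2$, so $\beta_{p,p+1}^S(S/J_G)$ must be a (nonzero) extremal Betti number, whence $G$ is complete --- contradicting $\reg(S/J_G)\geq 2$, which follows from $\beta_{i,i+2}^S(S/J_G)\neq 0$ together with the characterisation $\reg(S/J_G)=1 \Leftrightarrow G$ complete. The genuine gap is exactly where you flag it: the converse implication $\beta_{p,p+1}^S(S/J_G)\neq 0 \Rightarrow G$ complete is never proved. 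Your local-duality reduction cannot close it on its own: knowing that $\operatorname{Ext}^p_S(S/J_G,S)$ has a generator in degree $-(p+1)$ says nothing about generators in degrees $-(p+j)$ with $j\geq 2$ in the \emph{same} homological position, let alone about the lower local cohomology modules, so nothing formal forces $\reg(S/J_G)=1$; and since $\reg(S/J_G)=1$ is \emph{equivalent} to completeness of $G$, your intended reduction is not a weakening of the lemma but a restatement of it.

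The paper closes this in one line with a structural result you did not supply: the linear strand of a binomial edge ideal is known in closed form, $\beta_{i,i+1}^S(S/J_G)=i\,\sk_{i+1}(G)$ for all $i$ (cited as \cite[Corollary 4.3]{HKS}), where $\sk_{i+1}(G)$ is the number of cliques of cardinality $i+1$ in $G$. Combined with the bound $p\geq n-1$ for connected graphs (Remark \ref{pd-lower}, which rests on $\pd_S(S/J_G)\geq n-2+\kappa(G)$ from \cite{AN2017}), this gives both directions at once: $\beta_{p,p+1}^S(S/J_G)\neq 0$ if and only if $G$ contains a clique on $p+1\geq n$ vertices, if and only if $G=K_n$. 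This subsumes your Eagon--Northcott computation for the forward direction as well. Your instinct that ``structure special to binomial edge ideals'' is required was correct, but the co-chordal-cover description of linear strands you mention is a tool for \emph{monomial} edge ideals; the clique formula for the linear strand is the missing key, and without it (or an equivalent substitute) the proposal does not establish the equivalence on which your \emph{Moreover} argument depends.
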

\begin{proof}
	By Remark \ref{pd-lower}, $p \geq n-1$.
	It follows from \cite[Corollary 4.3]{HKS} that $\beta_{p,p+1}^S(S/J_{G})=p \sk_{p+1}(G)$. Therefore, $\beta_{p,p+1}^S(S/J_G)\neq 0$
	if and only if $G$ is a complete graph. Now, if possible assume that $i<p$. Since $\beta_{i,i+2}^S(S/J_G) $ is an extremal Betti
	number, $\beta_{p,p+j}^S(S/J_G)=0$ for $j\geq 2$, which implies that $\beta_{p,p+1}^S(S/J_G)$ must be an extremal Betti number. Thus,
	$G$ is a complete graph which contradicts \cite[Theorem 2.1]{KM1}, as $\reg(S/J_G) \geq 2$. Hence, the assertion follows.
\end{proof}
Let $M$ be a finite graded $S$-module. The \textit{Cohen-Macaulay defect}, denoted by $\cmdef(M)$, is defined by $\dim(M)-\depth_{S}(M)$. 
A graded $S$-module $M$ is said to be \textit{almost Cohen-Macaulay} if
$\cmdef(M)=1$. A graph $G$ is said to be \textit{(almost) Cohen-Macaulay} if $S/J_G$ is 
(almost) Cohen-Macaulay. 

First, we recall some basic facts about Betti numbers and minimal free resolution. Let $R=\KK[x_1,\dots,x_m]$, $R'=\KK[x_{m+1},\dots,x_n]$ and $T=\KK[x_1,\dots,x_n]$ be
polynomial rings. Let $I\subseteq R$ and $J\subseteq R'$ be homogeneous ideals. Then minimal free resolution of $T/(I+J)$ is tensor product of minimal free resolutions of $R/I$ and $R'/J$. Also, for all $i,j$, 
\begin{align}\label{Bettiproduct}
	\beta_{i,i+j}^T\left(\frac{T}{I+J}\right)= \underset{{\substack{i_1+i_2=i \\ j_1+j_2=j}}}{\sum}\beta_{i_1,i_1+j_1}^R\left(\frac{R}{I}\right)\beta_{i_2,i_2+j_2}^{R'}\left(\frac{R'}{J}\right).
\end{align}
Now, we construct almost Cohen-Macaulay graphs.
\begin{theorem}\label{cone-depth}
	Let $H$ be a connected graph on the vertex set $[n]$ and $G=v*H$ be the cone of $v$ on  $H$. Then, $\depth_S(S/J_G) =\depth_{S_H}(S_H/J_H).$
	In particular, if $H$ is  Cohen-Macaulay, then $G$ is  almost Cohen-Macaulay.
\end{theorem}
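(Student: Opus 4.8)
The plan is to read the depth of $S/J_G$ off the short exact sequence \eqref{cone-ses} using Lemma \ref{depth-lemma}, so the first task is to compute the depth of each module appearing there. Write the sequence as $0 \to S/J_G \to A \oplus B \to C \to 0$ with $A = S/((x_v,y_v)+J_H)$, $B = S/J_{K_{n+1}}$ and $C = S/((x_v,y_v)+J_{K_n})$. Since $A$ is annihilated by $(x_v,y_v)$ and $S/(x_v,y_v) \cong S_H$, we have $A \cong S_H/J_H$, and because depth is preserved when passing to a module over the quotient by a regular sequence, $\depth_S A = \depth_{S_H}(S_H/J_H)$; set $d := \depth_{S_H}(S_H/J_H)$. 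In the same way $C \cong S_H/J_{K_n}$, which is Cohen--Macaulay of dimension $n+1$ by Remark \ref{pd-lower}, so $\depth_S C = n+1$; and $K_{n+1}$ is complete on $n+1$ vertices, so by Remark \ref{pd-lower} the module $B$ is Cohen--Macaulay of dimension $n+2$, giving $\depth_S B = n+2$. Hence $\depth_S(A \oplus B) = \min\{d, n+2\}$, and since $H$ is connected and not complete, Remark \ref{pd-lower} also yields $d \le n+1$, so $\depth_S(A \oplus B) = d$.

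Now I would apply Lemma \ref{depth-lemma} to the sequence. Part (i) gives $\depth_S(S/J_G) \ge \min\{\depth_S(A \oplus B), \depth_S C + 1\} = \min\{d, n+2\} = d$, which is the lower bound I want. For the matching upper bound I split into two cases. If $d < n+1$, then $\depth_S(A \oplus B) = d < n+1 = \depth_S C$, so part (iii) of the lemma gives $\depth_S(S/J_G) = \depth_S(A \oplus B) = d$ at once, settling this case.

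The case $d = n+1$ is where the argument is most delicate, because here the depth lemma only returns $\depth_S(S/J_G) \ge n+1$ and cannot separate $n+1$ from $n+2$. To close the gap I would bring in the connectivity of the cone: removing the apex $v$ leaves $H$, which is connected, and removing a vertex $u \in V(H)$ leaves $v*(H \setminus u)$, which is connected because $v$ stays adjacent to everything; thus $G$ is $2$-vertex-connected, i.e. $\kappa(G) \ge 2$. As $G$ is connected on $n+1$ vertices and not complete (because $H$ is not complete), the projective dimension bound recalled in Remark \ref{pd-lower} gives $\pd_S(S/J_G) \ge (n+1) - 2 + \kappa(G) \ge n+1$, whence $\depth_S(S/J_G) = 2(n+1) - \pd_S(S/J_G) \le n+1$. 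Together with the lower bound this forces $\depth_S(S/J_G) = n+1 = d$, so in all cases $\depth_S(S/J_G) = d$. I expect this $2$-connectivity input to be the main obstacle, since it is the single point where the formal depth lemma is not enough.

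For the final assertion, suppose $H$ is Cohen--Macaulay. Then $d = \depth_{S_H}(S_H/J_H) = \dim(S_H/J_H)$, and since a connected Cohen--Macaulay graph on $n$ vertices has depth $n+1$, we get $d = n+1$ and hence $\depth_S(S/J_G) = n+1$ by the theorem. It remains to compute $\dim(S/J_G)$: the empty set gives the minimal prime $P_\emptyset(G) = J_{K_{n+1}}$ with $\dim(S/J_{K_{n+1}}) = n+2$, so $\dim(S/J_G) \ge n+2$; conversely, every minimal prime $P_T(G)$ with $v \notin T$ has dimension $(n+1) - |T| + 1 \le n+2$, while each with $v \in T$ reduces, after deleting $v$, to the subset $T' = T \setminus \{v\}$ of $V(H)$ and has dimension $\dim(S_H/P_{T'}(H)) \le \dim(S_H/J_H) = n+1$. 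Hence $\dim(S/J_G) = n+2$, and therefore $\cmdef(S/J_G) = (n+2) - (n+1) = 1$, that is, $G$ is almost Cohen--Macaulay.
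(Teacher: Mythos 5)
Your proof is correct, but at the decisive point it takes a genuinely different route from the paper. Both arguments share the same skeleton: the Ohtani exact sequence \eqref{cone-ses}, the identifications $\depth_S(S/((x_v,y_v)+J_H))=\depth_{S_H}(S_H/J_H)$ and $\depth_S(S/J_{K_{n+1}})=n+2$, and Lemma \ref{depth-lemma}, which settles everything except the case $\depth_{S_H}(S_H/J_H)=n+1$, where one must rule out $\depth_S(S/J_G)=n+2$. The paper closes that gap homologically: by Lemma \ref{linearstand-binomial}, a connected non-complete $H$ has $\beta_{n-1,n-1+j}^{S_H}(S_H/J_H)\neq 0$ for some $j\geq 2$, and since the complete-graph terms in \eqref{cone-ses} have regularity $1$ and $\pd_S(S/J_{K_{n+1}})=n$, the long exact sequence of Tor in homological degree $n+1$ and graded degree $n+1+j$ collapses to an isomorphism that transfers this nonvanishing to $\beta_{n+1,n+1+j}^S(S/J_G)$, forcing $\pd_S(S/J_G)\geq n+1$. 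You instead observe that the cone over a connected graph is $2$-vertex-connected (your verification is correct: deleting $v$ leaves $H$ connected, and deleting $u\in V(H)$ leaves a cone, which is connected) and invoke the connectivity bound $\pd_S(S/J_G)\geq (n+1)-2+\kappa(G)$ recalled in Remark \ref{pd-lower} from \cite{AN2017}, which applies since $G=v*H$ is not complete. Your route is shorter and purely combinatorial, but the paper's Tor argument buys reusability and robustness: the same collapse of the long exact sequence underlies Theorem \ref{Betti-cone} and Proposition \ref{extremal-cone}, and, importantly, it still works when $H$ is disconnected (Theorem \ref{cone-depth1}), whereas your method breaks there because removing $v$ then disconnects the graph, so $\kappa(v*H)=1$ and the connectivity bound only gives $\pd_S(S/J_G)\geq n$, i.e.\ $\depth_S(S/J_G)\leq n+2$, which cannot separate $n+1$ from $n+2$. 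Finally, for the ``in particular'' assertion you compute $\dim(S/J_G)=n+2$ by hand from the minimal primes $P_T(G)$, using $c_T=1$ when $v\notin T$ and the reduction to $P_{T\setminus\{v\}}(H)$ when $v\in T$, where the paper simply cites \cite[Theorem 4.6]{AR1}; your computation is correct and makes this part self-contained.
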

\begin{proof}
	Assume that $\depth_{S_H}(S_H/J_H)=n+1$. Therefore, $\pd_{S_H}(S_H/J_H)=n-1$ and $\pd_S(S/((x_v,y_v)+J_H))=n+1$. Also, we have 
	$\pd_{S}(S/((x_v,y_v)+J_{K_n}))=n+1$ and  $\pd_{S}(S/J_{K_{n+1}})=n$. Since, $H$ is a connected graph, by Lemma \ref{linearstand-binomial}, there exists a $j \geq 2$ such that
	$\beta_{n-1,n-1+j}^{S_H}(S_H/J_H) \neq 0$. Consider, the long exact sequence of Tor corresponding to \eqref{cone-ses},
	\begin{align*}
		0 \rightarrow \Tor_{n+1,n+1+j}^{S}\left( \frac{S}{J_G},\KK\right)\rightarrow \Tor_{n+1,n+1+j}^{S}\left( \frac{S}{(x_v,y_v)+J_{H}},\KK\right) \rightarrow 0.
	\end{align*}
	Since $\beta_{n+1,n+1+j}^S(S/((x_v,y_v)+J_H)) \neq 0$, $\beta_{n+1,n+1+j}^S(S/J_G)\neq 0$. Therefore, $\pd_S(S/J_G)\geq n+1$ and 
	hence by Auslander-Buchsbaum formula, $\depth_S(S/J_G) \leq n+1$. Now
	using Lemma \ref{depth-lemma} on the short exact sequence \eqref{cone-ses}, we get that
	\[\depth_S(S/J_G)\geq  \min \{\depth_{S_H}(S_H/J_H),n+2\}=n+1.\]
	Hence, $\depth_{S}(S/J_G)=n+1$.  If $\depth_{S_H}(S_H/J_H)<n+1$, then by Lemma \ref{depth-lemma}, $\depth_S(S/J_G)=\depth_{S_H}(S_H/J_H).$ Now, if $H$ is
	Cohen-Macaulay, then $\depth_{S}(S/J_G)=n+1$. It follows from \cite[Theorem 4.6]{AR1} that $\dim(S/J_G)=n+2$. Hence, $G$ is an almost Cohen-Macaulay. 
\end{proof}
\begin{theorem}\label{cone-cmdef}
	Let $H$ be a connected graph on the vertex set $[n]$ and $G=v*H$ be the cone of $v$ on $H$. If $\dim(S_H/J_H) \geq n+2$,
	then $\cmdef(S/J_G)=\cmdef(S_H/J_H)$ and otherwise $\cmdef(S/J_G)=\cmdef(S_H/J_H)+1.$
\end{theorem}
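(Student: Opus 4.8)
The plan is to read off the Cohen--Macaulay defect directly from its definition $\cmdef(S/J_G)=\dim(S/J_G)-\depth_S(S/J_G)$, using two inputs: the depth identity $\depth_S(S/J_G)=\depth_{S_H}(S_H/J_H)$ already established in Theorem \ref{cone-depth}, and a computation of the Krull dimension $\dim(S/J_G)$. Since the depths on the two sides agree, the whole statement reduces to comparing $\dim(S/J_G)$ with $\dim(S_H/J_H)$, and the claimed dichotomy will come from exactly how much larger $\dim(S/J_G)$ can be.

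For the dimension I would use the decomposition $J_G=\bigcap_{T\subseteq V(G)}P_T(G)$, together with the height computation $\h(P_T(G))=|T|+|V(G)|-c_T$, which gives $\dim(S/P_T(G))=|V(G)|-|T|+c_T$ for every $T$; since each $P_T(G)\supseteq J_G$, this yields $\dim(S/J_G)=\max_T\bigl(|V(G)|-|T|+c_T\bigr)$. Writing $f_G(T):=|V(G)|-|T|+c_T$, I would split the maximization over $T\subseteq V(G)=[n]\sqcup\{v\}$ by whether the cone vertex $v$ lies in $T$. If $v\notin T$, then $G[\bar T]$ still contains $v$, which is adjacent to every remaining vertex, so $G[\bar T]$ is connected, $c_T=1$, and $f_G(T)=(n+1)-|T|+1\le n+2$ with equality at $T=\emptyset$. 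If $v\in T$, put $T'=T\setminus\{v\}$; then $\bar T=\overline{T'}$ (complement taken in $[n]$) and, because $G\setminus v=H$, we get $G[\bar T]=H[\overline{T'}]$, whence $c_T=c_{T'}(H)$ and $f_G(T)=n-|T'|+c_{T'}(H)=f_H(T')$. Maximizing over the two cases gives the key identity
\[
\dim(S/J_G)=\max\{\,n+2,\ \dim(S_H/J_H)\,\}.
\]
(Alternatively, as $v*H=K_1*H$, one may quote the dimension formula for joins from \cite[Theorem 4.6]{AR1}.)

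To finish I would invoke the standing hypothesis that $H$ is connected, which forces $\dim(S_H/J_H)\ge n+1$: taking $T'=\emptyset$ gives $c_\emptyset(H)=1$ and $f_H(\emptyset)=n+1$. Now if $\dim(S_H/J_H)\ge n+2$, the displayed identity gives $\dim(S/J_G)=\dim(S_H/J_H)$, and subtracting the equal depths yields $\cmdef(S/J_G)=\cmdef(S_H/J_H)$. Otherwise $\dim(S_H/J_H)<n+2$, which by the lower bound means $\dim(S_H/J_H)=n+1$ exactly; then $\dim(S/J_G)=n+2=\dim(S_H/J_H)+1$, and again subtracting the equal depths gives $\cmdef(S/J_G)=\cmdef(S_H/J_H)+1$. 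I do not anticipate a genuine obstacle; the only step needing care is the dimension identity, in particular the clean split of the subsets $T$ by membership of $v$ and the identification $G[\bar T]=H[\overline{T'}]$ when $v\in T$. The conceptual point that produces the exact increment of $1$ in the ``otherwise'' case is precisely the inequality $\dim(S_H/J_H)\ge n+1$ for connected $H$, which rules out any larger gap between $\dim(S/J_G)=n+2$ and $\dim(S_H/J_H)$.
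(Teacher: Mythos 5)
Your proposal is correct and takes essentially the same route as the paper: the paper also subtracts the depth equality of Theorem \ref{cone-depth} from the dimension dichotomy $\dim(S/J_G)=\max\{n+2,\dim(S_H/J_H)\}$, except that it simply cites \cite[Theorem 4.6]{AR1} for the dimension statement, whereas you rederive that identity from the decomposition $J_G=\bigcap_{T}P_T(G)$ and the height formula $\h(P_T(G))=|T|+|V(G)|-c_T$, splitting the maximization according to whether $v\in T$ (and you yourself flag the citation as the alternative). Your explicit use of $\dim(S_H/J_H)\geq n+1$ for connected $H$, which makes the ``otherwise'' case exactly $\dim(S_H/J_H)=n+1$, is the same fact the paper uses implicitly, so the two arguments agree in substance, with yours merely self-contained at the dimension step.
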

\begin{proof}
	It follows from \cite[Theorem 4.6]{AR1} that if $\dim(S_H/J_H) \geq n+2$, then $\dim(S/J_G)=\dim(S_H/J_H)$. Thus, by  
	Theorem \ref{cone-depth}, $\cmdef(S/J_G)=\cmdef(S_H/J_H)$. Now, if $\dim(S_H/J_H)=n+1$, then again by  \cite[Theorem 4.6]{AR1},  $\dim(S/J_G)=n+2$ and hence $\cmdef(S/J_G)=\cmdef(S_H/J_H)+1.$	
\end{proof}
We now show that one can construct graphs with as large Cohen-Macaulay
defect as one wants.
\begin{corollary}\label{cone-cmdeff}
	Let $H$ be a connected graph on $[n]$ and $q$ be a positive
	integer. If $G= K_q*H$, then $\depth_{S}(S/J_G)=\depth_{S_H}(S_H/J_H)$.
	In particular, if $H$ is  Cohen-Macaulay, then $\cmdef(S/J_G)=q$.
\end{corollary}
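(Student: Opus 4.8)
The plan is to realise $K_q*H$ as an iterated cone and then feed Theorems \ref{cone-depth} and \ref{cone-cmdef} into an induction on $q$. The starting observation is the identity
\[
K_q*H \;=\; v*\bigl(K_{q-1}*H\bigr),
\]
which holds because adjoining one new vertex adjacent to every vertex of $K_{q-1}*H$ both completes the clique on the $q$ cone vertices and attaches that last vertex to all of $V(H)$. Writing $G_k:=K_k*H$ (so $G_0=H$), each $G_k$ is a graph on $n+k$ vertices and $G_k=v*G_{k-1}$. Before iterating I would verify the two standing hypotheses of the cone theorems at every stage: that $G_{k-1}$ is connected and not complete. Connectivity is immediate since the join of two nonempty graphs is connected, and $G_{k-1}=K_{k-1}*H$ is non-complete exactly because $H$ is non-complete, the convention in force throughout this section. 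This non-completeness is genuinely needed: for complete $H$ one would have $K_q*H=K_{n+q}$, which is Cohen--Macaulay, and the conclusion would fail.

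The depth equality now follows by induction on $q$. The case $q=1$ is precisely Theorem \ref{cone-depth} applied to the connected graph $H$. For the inductive step, $G_{q-1}$ is connected and non-complete, so Theorem \ref{cone-depth} applied to $G=v*G_{q-1}$ yields
\[
\depth_S(S/J_G)=\depth_{S_{G_{q-1}}}\bigl(S_{G_{q-1}}/J_{G_{q-1}}\bigr),
\]
and the inductive hypothesis rewrites the right-hand side as $\depth_{S_H}(S_H/J_H)$. This establishes the first assertion.

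For the ``in particular'' statement, assume $H$ is Cohen--Macaulay. Since $H$ is connected on $n$ vertices, $\depth_{S_H}(S_H/J_H)=\dim(S_H/J_H)=n+1$. I would then prove by induction on $k$ the pair of statements
\[
\dim\bigl(S_{G_k}/J_{G_k}\bigr)=n+k+1 \qquad\text{and}\qquad \cmdef\bigl(S_{G_k}/J_{G_k}\bigr)=k .
\]
Both hold for $k=0$. Assuming them for $k-1$, the graph $G_{k-1}$ has $n+k-1$ vertices and dimension $n+k=(n+k-1)+1$, which is strictly below the threshold $(n+k-1)+2$; hence $G_k=v*G_{k-1}$ falls into the second branch of Theorem \ref{cone-cmdef}, giving $\cmdef(S_{G_k}/J_{G_k})=\cmdef(S_{G_{k-1}}/J_{G_{k-1}})+1=k$, while \cite[Theorem 4.6]{AR1} simultaneously records $\dim(S_{G_k}/J_{G_k})=(n+k-1)+2=n+k+1$. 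Taking $k=q$ gives $\cmdef(S/J_G)=q$, as claimed.

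Once the iterated-cone identity is in place the argument is largely bookkeeping, and the depth part is immediate. The only step demanding care is the dimension tracking: one must check at each stage that $\dim(S_{G_{k-1}}/J_{G_{k-1}})$ stays pinned at its minimal value $(n+k-1)+1$, since this is exactly what forces every cone step into the dimension-increasing branch of Theorem \ref{cone-cmdef} and so makes the Cohen--Macaulay defect grow by precisely one at each stage. I expect this invariant---rather than the depth computation---to be the crux of the proof.
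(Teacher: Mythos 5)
Your proof is correct, and its first half---the iterated-cone identity $K_q*H=v*(K_{q-1}*H)$ together with a $q$-fold application of Theorem \ref{cone-depth}, with connectivity and non-completeness checked at every stage---is exactly the paper's argument for the depth equality. Where you diverge is the \emph{in particular} statement: the paper simply quotes the join dimension formula \cite[Theorem 4.12]{AR1} to get $\dim(S/J_G)=n+q+1$ in one shot and subtracts $\depth_S(S/J_G)=n+1$, whereas you run a second induction on $k$ maintaining the pair of invariants $\dim(S_{G_k}/J_{G_k})=n+k+1$ and $\cmdef(S_{G_k}/J_{G_k})=k$, feeding each cone step through the dimension-increasing branch of Theorem \ref{cone-cmdef} together with the single-cone dimension result \cite[Theorem 4.6]{AR1}. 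Both routes are valid; yours is slightly longer but more self-contained, since it only uses the one-vertex cone results already established in the paper (Theorem \ref{cone-cmdef} is itself proved from \cite[Theorem 4.6]{AR1}) and makes explicit \emph{why} the defect grows by exactly one per cone---namely that the dimension of $S_{G_{k-1}}/J_{G_{k-1}}$ stays pinned at its minimum $(n+k-1)+1$, which is indeed the point the paper's one-line citation of \cite[Theorem 4.12]{AR1} glosses over. Your side remarks are also accurate: non-completeness of $H$ is the standing convention of the section and is genuinely needed (for $H$ complete, $K_q*H=K_{n+q}$ is Cohen--Macaulay and Theorem \ref{cone-depth} fails), and $\depth_{S_H}(S_H/J_H)=\dim(S_H/J_H)=n+1$ for connected Cohen--Macaulay $H$ follows from the depth bound recorded in Remark \ref{pd-lower}.
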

\begin{proof}
	Let $v_1,\ldots, v_q$ be vertices of $K_q$. Observe that $K_q*H =v_1*(\cdots * (v_q * H)\cdots)$. By  recursively applying 
	Theorem \ref{cone-depth}, $\depth_S(S/J_G)=\depth_{S_H}(S_H/J_H)$. Now, if $H$ is Cohen-Macaulay, then $\depth_S(S/J_G)=n+1$ and 
	it follows from \cite[Theorem 4.12]{AR1} that $\dim(S/J_G)=n+q+1$. Hence, the assertion follows.
\end{proof}
Let $G=K_q*H$, then by \cite[Theorem 4.12]{AR1} and Corollary \ref{cone-cmdeff}, if $\dim(S_H/J_H) \geq n+q+1$, 
then $\cmdef(S/J_G)=\cmdef(S_H/J_H)$ otherwise $\cmdef(S/J_G)=n+q+1-\dim(S_H/J_H)+\cmdef(S_H/J_H)$. 

To compute the depth formula for cone of a disconnected graph, we need the following lemma. 
\begin{lemma}\label{linearstand-binomial2}
	Let $G$ be a disconnected graph on the vertex set $[n]$. Assume that $G$ has atleast two nontrivial components. 
	Let $p=\pd_S(S/J_{G})$. Then  $\beta_{p,p+1}^S(S/J_G)=0$. Moreover, if $\beta_{i,i+2}^S(S/J_G) $ is an extremal Betti number, then $i=p$.
\end{lemma}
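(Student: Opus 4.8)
The plan is to reduce everything to the connected components of $G$ via the tensor product formula \eqref{Bettiproduct}. Let $G_1,\dots,G_k$ be the nontrivial connected components of $G$, where $k\geq 2$ by hypothesis, say $G_a$ has vertex set of size $n_a$, and let the remaining vertices be isolated. Since the $G_a$ are supported on pairwise disjoint sets of variables and isolated vertices contribute nothing to $J_G$, the ideal $J_G$ is the sum of the $J_{G_a}$ in disjoint variable sets, so by the iterated version of \eqref{Bettiproduct} the minimal free resolution of $S/J_G$ is the tensor product of those of the $S_a/J_{G_a}$. In particular $p=\pd_S(S/J_G)=\sum_{a=1}^{k} p_a$, where $p_a=\pd_{S_a}(S_a/J_{G_a})$, and by Remark \ref{pd-lower} each $p_a\geq n_a-1\geq 1$.

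For the first assertion I would expand $\beta_{p,p+1}^S(S/J_G)$ using \eqref{Bettiproduct}: it equals the sum over all $i_1+\cdots+i_k=p$ and $j_1+\cdots+j_k=1$ of $\prod_{a}\beta_{i_a,i_a+j_a}^{S_a}(S_a/J_{G_a})$. A nonzero summand forces $i_a\leq p_a$ for every $a$; since $\sum_a i_a=p=\sum_a p_a$, this is possible only when $i_a=p_a$ for all $a$. But for each nontrivial component $\reg(S_a/J_{G_a})\geq 1$ (as $J_{G_a}$ is generated in degree $2$), so $\beta_{p_a,p_a+j_a}^{S_a}(S_a/J_{G_a})=0$ whenever $j_a\leq 0$; hence every surviving $j_a\geq 1$ and $\sum_a j_a\geq k\geq 2>1$. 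Thus no summand survives and $\beta_{p,p+1}^S(S/J_G)=0$. Equivalently, one checks $p\geq p_1+p_2\geq(n_1-1)+1=\max_a n_a$, so $G$ has no clique of size $p+1$, and applies the formula $\beta_{p,p+1}^S(S/J_G)=p\,\sk_{p+1}(G)$ of \cite[Corollary 4.3]{HKS}.

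The second assertion follows the same contradiction scheme as Lemma \ref{linearstand-binomial}, but is in fact shorter. Suppose $\beta_{i,i+2}^S(S/J_G)$ is extremal with $i<p$. Extremality gives $\beta_{p,p+s}^S(S/J_G)=0$ for all $s\geq 2$, since then $(p,s)\neq(i,2)$ with $p\geq i$ and $s\geq 2$. As $\beta_{p,p+s}^S(S/J_G)=0$ also for $s\leq 0$ (the top syzygy sits in positive internal degree, $p\geq 1$), the only shift that can occur in homological degree $p$ is $s=1$; since $p=\pd_S(S/J_G)$, this forces $\beta_{p,p+1}^S(S/J_G)\neq 0$, contradicting the first part. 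Hence $i=p$.

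The only real bookkeeping is the reduction to components and the vanishing $\beta_{p_a,p_a}^{S_a}(S_a/J_{G_a})=0$; once those are in place the argument is routine. Unlike the connected case, no appeal to \cite[Theorem 2.1]{KM1} is needed here, because the vanishing of $\beta_{p,p+1}^S(S/J_G)$ is unconditional once there are at least two nontrivial components.
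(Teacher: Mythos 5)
Your proposal is correct, and your primary argument for the vanishing $\beta_{p,p+1}^S(S/J_G)=0$ takes a genuinely different route from the paper's. You decompose $J_G$ over its nontrivial components and invoke the iterated tensor-product formula \eqref{Bettiproduct}: since the tensor product of the minimal resolutions is minimal, $p=\sum_a p_a$, so any nonzero term in the expansion of $\beta_{p,p+1}^S(S/J_G)$ must have $i_a=p_a$ for every $a$, forcing $j_a\geq 1$ in each of the $k\geq 2$ factors and hence $\sum_a j_a\geq 2>1$. One small wording slip there: the vanishing $\beta_{p_a,p_a+j_a}^{S_a}(S_a/J_{G_a})=0$ for $j_a\leq 0$ is not a consequence of $\reg(S_a/J_{G_a})\geq 1$; it follows from minimality of the resolution (the internal shifts strictly increase, so $\beta_{i,i+j}=0$ for $j\leq 0$ once $i\geq 1$) — the fact itself is true, so this does not affect correctness. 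The paper instead applies \cite[Corollary 4.3]{HKS} directly, writing $\beta_{p,p+1}^S(S/J_G)=p\,\sk_{p+1}(G)$ and noting $p\geq m-q=\sum_a\bigl(|V(H_a)|-1\bigr)$, which with $q\geq 2$ strictly exceeds the size of the largest component minus one, so $G$ has no clique on $p+1$ vertices; this is exactly the one-line alternative you record at the end of your second paragraph. The K\"unneth-style route is more self-contained (no appeal to the linear-strand formula of \cite{HKS}) and makes transparent where the hypothesis of at least two nontrivial components enters, while the paper's route is shorter given the cited formula. For the ``moreover'' assertion you argue exactly as the paper does (mirroring Lemma \ref{linearstand-binomial}): extremality of $\beta_{i,i+2}^S(S/J_G)$ with $i<p$ kills all shifts $s\geq 2$ in homological degree $p$, and since shifts $s\leq 0$ are impossible, only $s=1$ remains, forcing $\beta_{p,p+1}^S(S/J_G)\neq 0$ against the first part — your version merely spells out the degree bookkeeping that the paper leaves implicit.
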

\begin{proof}
	Let $H_1,\ldots,H_q$ be nontrivial connected components of $G$ with $q \geq 2$. By Remark \ref{pd-lower}, for each $i \in [q]$,
	$\pd_{S_{H_i}}(S_{H_i}/J_{H_i})\geq |V(H_i)|-1$, where $S_{H_i}=\KK[x_v,y_v: v \in V(H_i)]$. Let $m= \sum_{i=1}^q |V(H_i)|$. Thus,   
	$p\geq m-q.$  It follows from \cite[Corollary 4.3]{HKS}, that $\beta_{p,p+1}^S(S/J_{G})=p \sk_{p+1}(G)$. If possible, $\beta_{p,p+1}^S(S/J_G)\neq 0$, 
	then $G$ has an induced clique of size atleast $m-q+1$, which is a contradiction. Now, if possible assume that $i<p$ which implies that $\beta_{p,p+1}^S(S/J_G)$
	is an extremal Betti number, which is a contradiction as $\beta_{p,p+1}^S(S/J_G)=0$. Hence, the assertion follows.
\end{proof} 
\begin{remark}\label{linearstand-binomial3}
	Let $G$ be a disconnected graph on $[n]$. If $\depth_S(S/J_G)=n+1$, then either $G$ has atleast two nontrivial components or  $G$ 
	has exactly one nontrivial component which is not a complete graph. Moreover, $\beta_{n-1,n}^S(S/J_G)=0$.
\end{remark}
We now compute the depth formula for cone of a disconnected graph.
\begin{theorem}\label{cone-depth1}
	Let	$G=v* H$, where $H$ is a disconnected graph on $[n]$. Then  \[\depth_S(S/J_G)=\min \{\depth_{S_H}(S_H/J_H),n+2\}.\]
\end{theorem}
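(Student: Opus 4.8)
The plan is to run the short exact sequence \eqref{cone-ses} through the depth Lemma \ref{depth-lemma}, exactly as in the proof of Theorem \ref{cone-depth}, and to isolate a single borderline case where the Lemma is inconclusive. Writing $d=\depth_{S_H}(S_H/J_H)$, I would first record the depths of the three terms. As an $S$-module, $S/((x_v,y_v)+J_H)$ is $S_H/J_H$ with $x_v,y_v$ acting as zero, so it has depth $d$; the module $S/J_{K_{n+1}}$ is Cohen--Macaulay of dimension $n+2$, hence of depth $n+2$; and $S/((x_v,y_v)+J_{K_n})\cong S_H/J_{K_n}$ is Cohen--Macaulay of dimension $n+1$, hence of depth $n+1$. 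Thus the middle term of \eqref{cone-ses} has depth $\min\{d,n+2\}$ and the right-hand term has depth $n+1$. Lemma \ref{depth-lemma}(i) then gives the lower bound $\depth_S(S/J_G)\ge\min\{d,n+2\}$ for free, so only the matching upper bound remains.

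Next I would dispatch the two non-borderline cases by the Lemma alone. If $d\le n$, the middle term has depth $d$, strictly below the depth $n+1$ of the right-hand term, so Lemma \ref{depth-lemma}(iii) gives $\depth_S(S/J_G)=d=\min\{d,n+2\}$. If $d\ge n+2$, the middle term has depth $n+2$, strictly above $n+1$, so Lemma \ref{depth-lemma}(ii) gives $\depth_S(S/J_G)=(n+1)+1=n+2=\min\{d,n+2\}$. In both cases we are done immediately.

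The remaining case $d=n+1$ is the one I expect to be the main obstacle: the middle and right-hand terms of \eqref{cone-ses} then have the same depth $n+1$, and Lemma \ref{depth-lemma} returns only $\depth_S(S/J_G)\ge n+1$. Here I would instead exhibit a nonzero Betti number to force $\pd_S(S/J_G)\ge n+1$, and conclude $\depth_S(S/J_G)\le n+1$ by Auslander--Buchsbaum. Since $d=n+1$ means $\pd_{S_H}(S_H/J_H)=n-1$, some $\beta^{S_H}_{n-1,n-1+j}(S_H/J_H)$ is nonzero; moreover Remark \ref{linearstand-binomial3} applies, because $H$ is disconnected with $\depth_{S_H}(S_H/J_H)=n+1$, and yields $\beta^{S_H}_{n-1,n}(S_H/J_H)=0$. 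Hence I may take such a $j\ge 2$. Feeding $J_H\subset S_H$ and $(x_v,y_v)\subset\KK[x_v,y_v]$ into the K\"unneth formula \eqref{Bettiproduct}, using the Koszul resolution of $\KK$ and $\pd_{S_H}(S_H/J_H)=n-1$, only the top Koszul term survives and I obtain $\beta^S_{n+1,n+1+j}(S/((x_v,y_v)+J_H))=\beta^{S_H}_{n-1,n-1+j}(S_H/J_H)\neq 0$.

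Finally I would pass to the long exact sequence of $\Tor(-,\KK)$ associated with \eqref{cone-ses} in internal degree $n+1+j$. The crux is that the neighbouring terms vanish: $\pd_S(S/((x_v,y_v)+J_{K_n}))=n+1$ kills $\Tor_{n+2}$ of the right-hand term, while both $S/((x_v,y_v)+J_{K_n})$ and $S/J_{K_{n+1}}$ have regularity $1$, so their graded Tor in internal degree $n+1+j$ with $j\ge 2$ vanishes; in particular $\pd_S(S/J_{K_{n+1}})=n$ removes the complete-graph summand from the middle term in homological degree $n+1$. What is left is the isomorphism $0\to\Tor^S_{n+1,n+1+j}(S/J_G,\KK)\to\Tor^S_{n+1,n+1+j}(S/((x_v,y_v)+J_H),\KK)\to 0$, so $\beta^S_{n+1,n+1+j}(S/J_G)\neq 0$. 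This gives $\depth_S(S/J_G)\le n+1$, and together with the lower bound settles the borderline case and hence the theorem.
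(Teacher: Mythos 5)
Your proposal is correct and follows essentially the same route as the paper's own proof: the same case split on $\depth_{S_H}(S_H/J_H)$ versus $n+1$ via Lemma \ref{depth-lemma} applied to \eqref{cone-ses}, and in the borderline case $\depth_{S_H}(S_H/J_H)=n+1$ the same use of Remark \ref{linearstand-binomial3} to produce $j\geq 2$ with $\beta^{S_H}_{n-1,n-1+j}(S_H/J_H)\neq 0$, transferred through \eqref{Bettiproduct} and the long exact sequence of $\Tor$ to force $\pd_S(S/J_G)\geq n+1$. You merely make explicit some details the paper leaves implicit (the Koszul/K\"unneth bookkeeping and the regularity-based vanishing of the neighbouring $\Tor$ terms), which is fine.
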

\begin{proof}
	If $\depth_{S_H}(S/J_H)<n+1$, then by using Lemma \ref{depth-lemma} in the short exact sequence \eqref{cone-ses}, we have $\depth_S(S/J_G)=\depth_{S_H}(S_H/J_H).$ 
	Also, if  $\depth_{S_H}(S_H/J_H)>n+1$, then by virtue of Lemma \ref{depth-lemma}  $\depth_S(S/J_G)=n+2$. Now, assume that $\depth_{S_H}(S_H/J_H)=n+1$.   
	Observe that $\pd_{S_H}(S_H/J_H)=n-1$, $\pd_{S}(S/((x_v,y_v)+J_H)) =n+1$ and $\pd_{S}(S/((x_v,y_v)+J_{K_n})) =n+1$. By Remark \ref{linearstand-binomial3},
	there exists $j \geq 2$ such that $\beta_{n-1,n-1+j}^{S_H}(S_H/J_H) \neq 0$. Now consider, the long exact sequence of Tor corresponding to \eqref{cone-ses},
	\begin{align*}
		0 \rightarrow \Tor_{n+1,n+1+j}^{S}\left( \frac{S}{J_G},\KK\right)\rightarrow \Tor_{n+1,n+1+j}^{S}\left( \frac{S}{(x_v,y_v)+J_{H}},\KK\right) \rightarrow 0.
	\end{align*}
	Since, $\beta_{n+1,n+1+j}^S(S/((x_v,y_v)+J_H)) \neq 0$ and hence $\beta_{n+1,n+1+j}^S(S/J_G)\neq 0$. Therefore, $\pd_S(S/J_G)\geq n+1$ and hence 
	$\depth_S(S/J_G) \leq n+1$. Using Lemma \ref{depth-lemma}, we have $\depth_S(S/J_G)\geq n+1$ and this completes the proof.
\end{proof}
Also, if $G=K_q *H$, where $H$ is a disconnected graph, then by Theorems \ref{cone-depth}, \ref{cone-depth1}, $\depth_S(S/J_G)=\min\{\depth_{S_H}(S_H/J_H),n+2\}$. 
Now we compute the Betti numbers of $S/J_{v*H}$ in terms of the Betti numbers of $S_H/J_H$.

\begin{theorem}\label{Betti-cone}
	Let $H$ be a graph on the vertex set $[n]$. Let $G=v*H$ be the cone of $v$ on $H$.  Then, for  $i,j$,  
	\[\beta_{i,i+j}^{S}\left(\frac{S}{J_{G}}\right)= \left\{
	\begin{array}{ll}
	i\left(\sk_i(H)+\sk_{i+1}(H)\right), & \text{ if } j=1 \\
	\beta_{i,i+2}^{S_H}\left(\frac{S_H}{J_{H}}\right)+2\beta_{i-1,i+1}^{S_H}\left(\frac{S_H}{J_{H}}\right)+\beta_{i-2,i}^{S_H}\left(\frac{S_H}{J_{H}}\right)\\ +(i-1)\binom{n+1}{i+1}- (i-1)\sk_{i}(H)-(i-1)\sk_{i+1}(H), & \text{ if } j=2 \\
	\beta_{i,i+j}^{S_H}\left(\frac{S_H}{J_{H}}\right)+2\beta_{i-1,i-1+j}^{S_H}\left(\frac{S_H}{J_{H}}\right)+\beta_{i-2,i-2+j}^{S_H}\left(\frac{S_H}{J_{H}}\right), & \text{ if } j\geq 3,
	\end{array} \right.\]
	where $\beta_{i-2,i-2+j}^S\left(\frac{S}{J_{G}}\right)=0$ and $\beta_{i-1,i-1+j}^S\left(\frac{S}{J_{G}}\right)=0$, if $i-2<0$ and $i-1<0$ respectively.
\end{theorem}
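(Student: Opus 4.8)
\emph{Plan.} The idea is to read off every Betti number of $A:=S/J_G$ from the long exact sequence of $\Tor_\bullet^S(-,\KK)$ attached to \eqref{cone-ses}, using that the two ``complete-graph'' modules in that sequence have linear resolutions. Write $B=\bigl(S/((x_v,y_v)+J_H)\bigr)\oplus\bigl(S/J_{K_{n+1}}\bigr)$ and $C=S/((x_v,y_v)+J_{K_n})$. First I would record the Betti numbers of the three building blocks. Since $J_{K_m}$ is the ideal of $2\times2$ minors of a generic $2\times m$ matrix, $S/J_{K_m}$ has a linear resolution with $\beta_{i,i+1}^S(S/J_{K_m})=i\binom{m}{i+1}$; in particular $\reg(S/J_{K_{n+1}})=1$. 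Because $C$ is the tensor product over $\KK$ of $S_{K_n}/J_{K_n}$ with the Koszul complex on $(x_v,y_v)$, formula \eqref{Bettiproduct} gives $\reg(C)=1$, the degree-zero strand $\beta_{i,i}^S(C)=\binom{2}{i}$, and an explicit linear strand $\beta_{i,i+1}^S(C)$. The same formula applied to the first summand yields
\[
\beta_{i,i+j}^S\bigl(S/((x_v,y_v)+J_H)\bigr)=\beta_{i,i+j}^{S_H}(S_H/J_H)+2\beta_{i-1,i-1+j}^{S_H}(S_H/J_H)+\beta_{i-2,i-2+j}^{S_H}(S_H/J_H),
\]
which is exactly the ``$J_H$-part'' of the claimed formula.

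With these in hand the case $j\ge 3$ is immediate: in internal degree $i+j$ both $\Tor_{i,i+j}^S(C,\KK)$ and $\Tor_{i+1,i+j}^S(C,\KK)$ vanish since $\reg(C)=1$, and the $S/J_{K_{n+1}}$-summand of $\Tor_{i,i+j}^S(B,\KK)$ vanishes since $\reg(S/J_{K_{n+1}})=1$. Hence $\Tor_{i,i+j}^S(A,\KK)\cong\Tor_{i,i+j}^S\bigl(S/((x_v,y_v)+J_H),\KK\bigr)$, giving the $j\ge3$ formula. For $j=1$ I would use the description of the linear strand of a binomial edge ideal, $\beta_{i,i+1}^S(S/J_G)=i\,\sk_{i+1}(G)$ (compare \cite[Corollary 4.3]{HKS}, which is quoted at the top of the resolution in the proof of Lemma \ref{linearstand-binomial}), together with the clique count $\sk_{i+1}(v*H)=\sk_{i+1}(H)+\sk_i(H)$, which follows by sorting the cliques of $v*H$ according to whether they contain $v$.

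The real work is the case $j=2$. There $\reg(C)=\reg(S/J_{K_{n+1}})=1$ forces $\Tor_{i,i+2}^S(C,\KK)=0$ and, for $i\ge1$, also $\Tor_{i+2,i+2}^S(C,\KK)=\beta_{i+2,i+2}^S(C)=\binom{2}{i+2}=0$, so the long exact sequence collapses to
\[
0\to\Tor_{i+1,i+2}^S(A,\KK)\to\Tor_{i+1,i+2}^S(B,\KK)\to\Tor_{i+1,i+2}^S(C,\KK)\to\Tor_{i,i+2}^S(A,\KK)\to\Tor_{i,i+2}^S(B,\KK)\to0.
\]
Taking the alternating sum of dimensions gives
\[
\beta_{i,i+2}^S(A)=\beta_{i,i+2}^S(B)+\beta_{i+1,i+2}^S(A)-\beta_{i+1,i+2}^S(B)+\beta_{i+1,i+2}^S(C).
\]
Here $\beta_{i,i+2}^S(B)=\beta_{i,i+2}^S\bigl(S/((x_v,y_v)+J_H)\bigr)$ supplies the three $J_H$-terms, while the remaining three summands are all \emph{linear-strand} numbers: I would evaluate $\beta_{i+1,i+2}^S(A)=(i+1)\sk_{i+2}(v*H)$ from the $j=1$ case, and $\beta_{i+1,i+2}^S(B),\beta_{i+1,i+2}^S(C)$ from the linear strands of $S/J_{K_{n+1}}$, $S_{K_n}/J_{K_n}$ and $S_H/J_H$ via \eqref{Bettiproduct}. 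Substituting, the $\sk_\bullet(H)$-terms collect to $-(i-1)\sk_{i+1}(H)-(i-1)\sk_i(H)$, and the binomial terms collapse, by Pascal's identity $\binom{n+1}{i+1}=\binom{n}{i+1}+\binom{n}{i}$ applied twice, to $(i-1)\binom{n+1}{i+1}$, which is precisely the stated formula.

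\emph{Main obstacle.} The delicate point is the $j=2$ step: the five-term sequence yields only an Euler characteristic and therefore does not by itself separate $\beta_{i,i+2}^S(A)$ from $\beta_{i+1,i+2}^S(A)$. The computation closes only because the latter is already pinned down as a linear-strand number from the $j=1$ case, and because the complete-graph contributions cancel through Pascal's identity. I would accordingly settle $j=1$ first, and also dispatch the small-$i$ boundary values (where $J_G$ being generated in degree $2$ forces $\beta_{0,j}^S(A)=\beta_{1,j}^S(A)=0$ for $j\neq2$, matching the stated convention that negatively-indexed Betti numbers are zero), so that the recursion in the $j=2$ case carries no circularity.
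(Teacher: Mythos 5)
Your proposal is correct and takes essentially the same route as the paper's proof: the long exact Tor sequence attached to \eqref{cone-ses}, the clique count $\sk_{i+1}(v*H)=\sk_i(H)+\sk_{i+1}(H)$ with \cite[Corollary 4.3]{HKS} for $j=1$, the regularity-one vanishing of the two complete-graph terms for $j\geq 3$, and for $j=2$ the identical five-term Euler-characteristic relation, closed by evaluating $\beta_{i+1,i+2}^S(S/J_G)$ from the linear strand and cancelling the binomial terms via \eqref{Bettiproduct} and Pascal's identity. Your explicit handling of the small-$i$ boundary cases is a minor point the paper leaves implicit, and does not change the argument.
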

\begin{proof}
	It follows from \cite[Corollary 4.3]{HKS} that $\beta_{i,i+1}^{S}(S/J_G)=i\sk_{i+1}(G).$
	Let $U$ be a clique in $G$ on $(i+1)$-vertices. Then either $v\in U$ or $v\notin U$. If $v\notin U$, then $U$ is a clique in $H$ on $(i+1)$-vertices. 
	If $v\in U$, then $U\setminus \{v\}$ is a clique in $H$ on $i$-vertices. Therefore, $\sk_{i+1}(G)=\sk_i(H)+\sk_{i+1}(H)$ and hence 
	$\beta_{i,i+1}^{S}(S/J_G)=i\left(\sk_i(H)+\sk_{i+1}(H)\right)$.
	Now, consider the  long exact sequence of Tor modules corresponding to the short exact sequence \eqref{cone-ses}:
	\begin{multline*}
		\cdots \rightarrow \Tor_{i,i+j}^{S}\left( \frac{S}{J_G},\KK\right)\rightarrow \Tor_{i,i+j}^{S}\left( \frac{S}{(x_v,y_v)+J_{H}},\KK\right) 
		\oplus \Tor_{i,i+j}^{S}\left(\frac{S}{J_{K_{n+1}}},\KK\right) \\ \rightarrow \Tor_{i,i+j}^{S}\left(\frac{S}{(x_v,y_v)+J_{K_{n}}},\KK\right) \rightarrow \Tor_{i-1,i+j}^{S}\left( \frac{S}{J_G},\KK\right)\rightarrow \cdots
	\end{multline*}
	For $j=2$, the above long exact sequence of Tor gives us
	\begin{align*}
		\beta_{i,i+2}^{S}\left(\frac{S}{J_G}\right)& =\beta_{i,i+2}^{S}\left(\frac{S}{(x_v,y_v)+J_H}\right)+\beta_{i+1,i+2}^{S}\left(\frac{S}{(x_v,y_v)+J_{K_{n}}}\right)-
		\beta_{i+1,i+2}^{S}\left(\frac{S}{J_{K_{n+1}}}\right)\\ &-\beta_{i+1,i+2}^{S}\left(\frac{S}{(x_v,y_v)+J_H}\right)  +\beta_{i+1,i+2}^{S}\left(\frac{S}{J_G}\right).
	\end{align*}
	By \eqref{Bettiproduct}, we have 
	\begin{align*}
		\beta_{i+1,i+2}^S\left(\frac{S}{((x_v,y_v)+J_{K_n})}\right)&=\beta_{i+1,i+2}^{S_H}\left(\frac{S_H}{J_{K_n}}\right)+2\beta_{i,i+1}^{S_H}\left(\frac{S_H}{J_{K_n}}\right)+\beta_{i-1,i}^{S_H}\left(\frac{S_H}{J_{K_n}}\right)\\
		&=(i+1)\binom{n}{i+2}+2i\binom{n}{i+1}+(i-1)\binom{n}{i}
	\end{align*} and
	\begin{align*}
		\beta_{i,i+2}^{S}\left(\frac{S}{(x_v,y_v)+J_H}\right) = \beta_{i,i+2}^{S_H}\left(\frac{S_H}{J_H}\right)+2\beta_{i-1,i+1}^{S_H}\left(\frac{S_H}{J_H}\right)+\beta_{i-2,i}^{S_H}\left(\frac{S_H}{J_H}\right).
	\end{align*}
	Therefore, we have \begin{align*}
		\beta_{i,i+2}^{S}\left(\frac{S}{J_G}\right)& = \beta_{i,i+2}^{S_H}\left(\frac{S_H}{J_H}\right)+2\beta_{i-1,i+1}^{S_H}\left(\frac{S_H}{J_H}\right)+\beta_{i-2,i}^{S_H}\left(\frac{S_H}{J_H}\right)\\&+(i-1)\binom{n+1}{i+1}
		-(i-1)\sk_i(H)-(i-1)\sk_{i+1}(H).
	\end{align*}
	
	Now let $j\geq 3$. Since, $\reg(S/((x_v,y_v)+J_{K_{n}}))=\reg(S/J_{K_{n+1}})=1$, \[\Tor_{i,i+j}^{S}\left(\frac{S}{J_{K_{n+1}}},\KK\right)=
	\Tor_{i,i+j}^{S}\left(\frac{S}{(x_v,y_v)+J_{K_{n}}},\KK\right)= \Tor_{i+1,i+j}^{S}\left(\frac{S}{(x_v,y_v)+J_{K_{n}}},\KK\right)=0.\]
	Then for $j\geq 3$, $\Tor_{i,i+j}^{S}\left( \frac{S}{J_{G}},\KK\right)\simeq \Tor_{i,i+j}^{S}\left( \frac{S}{(x_v,y_v)+J_{H}},\KK\right)$ and hence by virtue of \eqref{Bettiproduct}, we have 
	\[\beta_{i,i+j}^S\left(\frac{S}{J_G}\right)=\beta_{i,i+j}^{S_H}\left(\frac{S_H}{J_{H}}\right)+2\beta_{i-1,i-1+j}^{S_H}\left(\frac{S_H}{J_{H}}\right)+\beta_{i-2,i-2+j}^{S_H}\left(\frac{S_H}{J_{H}}\right),\] which proves our result.
\end{proof}
Let $G=K_q*H$ be the join of a complete graph and $H$. Then by using the above theorem recursively, one can compute all the Betti numbers of $S/J_G$.
Now, we compute the Betti diagram of the wheel graph. The wheel graph, denoted by $W_n$, is the cone of $v$ on $C_n$, $n \geq 4$.
\begin{corollary}\label{Betti-wheel}
	Let $W_n=v*C_n$ be the wheel graph with $n\geq 4$. Then $\reg(S/J_{W_n})=n-2$, $\pd_S(S/J_{W_n})=n+2$ and the Betti diagram of $S/J_{W_n}$ looks like the following:
	\[
	\begin{array}{c|ccccccccccc}
	
	& 0 & 1  & 2 & \cdots & i & i+1  &  i+2  & \cdots & n & n+1 & n+2 \\
	\hline
	0 & 1 & 0 & 0 & \cdots & 0 & 0 & 0 & \cdots & 0 & 0 & 0 \\
	1 & 0 & \beta_{1,2} & \beta_{2,3} & \cdots & 0  & 0 & 0 & \cdots & 0 & 0 & 0  \\
	2 & 0 & 0 & \beta_{2,4} & \cdots & \beta_{i,i+2} & \beta_{i+1,i+3} & \beta_{i+2,i+4} & \cdots & \beta_{n,n+2} & 0  & 0 \\
	\vdots & \vdots & \vdots & \vdots & \vdots  & \vdots & \vdots & \ddots & \ddots
	& \vdots & \vdots\\
	i & 0 & 0 & 0 & \cdots & \beta_{i,2i} & \beta_{i+1,2i+1} & \beta_{i+2,2i+2} & \ddots & \ddots & 0 & 0  \\
	\vdots & \vdots & \vdots & \vdots & \vdots  & \vdots & \vdots & \ddots & \ddots
	& \vdots & \vdots\\
	n-2 & 0 & 0 &\beta_{2,n}   & \cdots & \beta_{i,n-2+i} & \beta_{i+1,n-1+i} & \beta_{i+2,n+i} & \cdots & \beta_{n,2n-2} & \beta_{n+1,2n-1} & \beta_{n+2,2n}
	
	\end{array}
	\]
	where, $\beta_{1,2}=2n$, $\beta_{2,3}=2n$, $\beta_{2,4}=\binom{n} {2}+\binom{n+1} {3}-n$, $\beta_{3,5}=2 \binom{n} {2}+2 \binom{n+1} {4}$, $\beta_{4,6}= \binom{n} {2}+3 \binom{n+1} {5}$,
	
	\begin{center}
		$\beta_{i,i+2}=(i-1)\binom{n+1}{i+1}\text{, if  } i=5,\ldots,n,$
	\end{center}
	
	\begin{center}
		$\beta_{i,2i}=\beta_{i+2,2i+2}=\binom{n}{i},\beta_{i+1,2i+1}=2\binom{n}{i} \text{, if  } i=3,\ldots,n-3,$
	\end{center}
	
	\begin{center}
		$\beta_{i,i+n-2}=(n+1-i)\binom{n+2}{i-2}+2\binom{n+1}{i-3}\text{, if  } i=2,\ldots,n-3,$
	\end{center}	
	$\beta_{n-2,2n-4}= \binom{n} {2} +3 \binom{n+2} {6}+2 
	\binom{n+1} {6}$, $\beta_{n-1,2n-3}=2 \binom{n+1} {3} +2 \binom{n} {4}+4 \binom{n+1} {5}$, $\beta_{n,2n-2}=\binom{n-1} {2}-1 + \binom{n+2} {4}+ 2
	\binom{n+1} {4}$, $\beta_{n+1,2n-1}=2 \binom{n-1} {2}-2 + 2 \binom{n} {3}$ and $\beta_{n+2,2n}=\binom{n-1} {2}-1 $.
\end{corollary}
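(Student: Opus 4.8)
The plan is to obtain the entire Betti table as a direct consequence of Theorem \ref{Betti-cone} applied with $H = C_n$, so that the whole corollary reduces to feeding two pieces of data into that formula: the clique counts of $C_n$ and the graded Betti numbers of $S_{C_n}/J_{C_n}$. Since $n \geq 4$, the cycle is triangle-free, so $\sk_1(C_n) = n$, $\sk_2(C_n) = n$, and $\sk_i(C_n) = 0$ for $i \geq 3$; these immediately give the linear strand via the $j=1$ case of Theorem \ref{Betti-cone}, namely $\beta_{i,i+1}^S(S/J_{W_n}) = i(\sk_i(C_n) + \sk_{i+1}(C_n))$, which equals $2n$ for $i \in \{1,2\}$ and $0$ otherwise, matching $\beta_{1,2} = \beta_{2,3} = 2n$. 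For the remaining input I would invoke the known resolution of the binomial edge ideal of a cycle from \cite{SZ}; the facts I expect to need are that $\reg(S_{C_n}/J_{C_n}) = n-2$ with a unique extremal Betti number $\beta_{n,2n-2}^{S_{C_n}}(S_{C_n}/J_{C_n}) = \binom{n-1}{2} - 1$ (so $\pd(S_{C_n}/J_{C_n}) = n$), together with the explicit intermediate strands $\beta_{i,i+j}^{S_{C_n}}(S_{C_n}/J_{C_n})$.

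For the rows $j \geq 3$ the formula in Theorem \ref{Betti-cone} is the pure shift $\beta_{i,i+j}^S = \beta_{i,i+j}^{S_{C_n}} + 2\beta_{i-1,i-1+j}^{S_{C_n}} + \beta_{i-2,i-2+j}^{S_{C_n}}$, so each nonzero Betti number of the cycle is smeared across three consecutive homological positions with weights $(1,2,1)$. In particular the diagonal values $\beta_{i,2i}^{S_{C_n}} = \binom{n}{i}$ propagate to $\beta_{i,2i}^S = \binom{n}{i}$, $\beta_{i+1,2i+1}^S = 2\binom{n}{i}$, $\beta_{i+2,2i+2}^S = \binom{n}{i}$ for $i = 3, \ldots, n-3$, exactly as claimed, since the off-diagonal neighbours of that strand vanish. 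The $j=2$ row is handled by the middle case of the theorem, where I would substitute $\beta_{i,i+2}^{S_{C_n}}$, $\beta_{i-1,i+1}^{S_{C_n}}$, $\beta_{i-2,i}^{S_{C_n}}$ and the clique corrections $-(i-1)\sk_i(C_n) - (i-1)\sk_{i+1}(C_n)$ into the formula; for $i \geq 3$ the clique corrections drop out and leave $(i-1)\binom{n+1}{i+1}$ plus the shifted cycle contributions, while $i = 2$ contributes the extra $-n$ visible in $\beta_{2,4} = \binom{n}{2} + \binom{n+1}{3} - n$.

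Finally, $\reg$ and $\pd$ are read off the assembled table: the largest $j$ carrying a nonzero entry is $\max\{2, n-2\} = n-2$, attained through the shifted extremal strand of $C_n$, giving $\reg(S/J_{W_n}) = n-2$; and the largest homological degree is $n+2$, produced when the cycle's extremal number $\beta_{n,2n-2}^{S_{C_n}}$ is shifted two steps by the $\beta_{i-2,i-2+j}^{S_{C_n}}$ term, giving $\pd(S/J_{W_n}) = n+2$ (consistent with Theorem \ref{cone-depth}, under which $\depth$ is preserved by coning). I expect the only real work to lie in the boundary rows, namely the corner entries $\beta_{n-2,2n-4}, \beta_{n-1,2n-3}, \beta_{n,2n-2}, \beta_{n+1,2n-1}, \beta_{n+2,2n}$, the low-index entries $\beta_{3,5}, \beta_{4,6}$, and the near-diagonal family $\beta_{i,i+n-2}$; there several strands of the cycle's resolution overlap under the $(1,2,1)$-shift and interact with the clique corrections, so the stated closed forms require careful binomial-coefficient bookkeeping rather than the clean one-strand propagation that governs the interior of the diagram.
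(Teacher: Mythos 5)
Your proposal is correct and is precisely the paper's argument: the paper's entire proof is the one-line observation that the corollary follows from \cite[Corollary 16]{SZ} (the Betti numbers of $S_{C_n}/J_{C_n}$) plugged into Theorem \ref{Betti-cone}, which is exactly your plan of combining the cycle's resolution with the clique counts $\sk_2(C_n)=n$, $\sk_i(C_n)=0$ for $i\geq 3$. Your $(1,2,1)$-shift bookkeeping for $j\geq 3$, the clique-correction analysis in the $j=2$ row, and the reading of $\reg$ and $\pd$ off the assembled table merely spell out what the paper's citation leaves implicit.
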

\begin{proof}
	The assertion follows from \cite[Corollary 16]{SZ} and Theorem \ref{Betti-cone}.
\end{proof}
Now, we study the position of extremal Betti number of $S/J_G$ in terms of the position of extremal Betti number of $S_H/J_H$. 
\begin{proposition}\label{extremal-cone}
	Let $H$ be a connected graph on the vertex set $[n]$. Let $G=v*H$ be the cone of $v$ on $H$. If $\beta_{i,i+j}^{S_H}(S_H/J_H)$ is an 
	extremal Betti number, then $\beta_{i+2,i+2+j}^{S}(S/J_G)$ is an extremal Betti number and both are equal. Moreover, if $\beta_{k,k+l}^S(S/J_G)$ is 
	an extremal Betti number, then $\beta_{k-2,k+l-2}^{S_H}(S_H/J_H)$ is an extremal Betti number and $\beta_{k,k+l}^S(S/J_G)=\beta_{k-2,k+l-2}^{S_H}(S_H/J_H)$.
\end{proposition}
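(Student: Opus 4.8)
The plan is to read both implications directly off the explicit formula in Theorem \ref{Betti-cone}, transferring the staircase (corner) vanishing conditions between the two Betti tables. Since $H$ is connected and not complete, $G=v*H$ is again connected and not complete, so by \cite[Theorem 2.1]{KM1} both $\reg(S_H/J_H)\geq 2$ and $\reg(S/J_G)\geq 2$, and by Lemma \ref{linearstand-binomial} the top linear Betti numbers $\beta_{p_H,p_H+1}^{S_H}$ and $\beta_{p_G,p_G+1}^{S}$ vanish, where $p_H=\pd_{S_H}(S_H/J_H)$ and $p_G=\pd_S(S/J_G)$. First I would record the short observation that an extremal Betti number lying in the linear strand $j=1$ must be $\beta_{p,p+1}$ (anything strictly above homological degree $i$ would have to vanish, contradicting the definition of $\pd$), which is therefore excluded; hence every extremal Betti number of either module sits in a row $j\geq 2$. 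I would also record the projective-dimension shift $p_G=p_H+2$, obtained by combining Theorem \ref{cone-depth} ($\depth_S(S/J_G)=\depth_{S_H}(S_H/J_H)$) with the Auslander--Buchsbaum formula in $S$ (having $2n+2$ variables) and in $S_H$ (having $2n$ variables), together with $p_H\geq n-1$ from Remark \ref{pd-lower}.

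For the forward implication, assume $\beta_{i,i+j}^{S_H}(S_H/J_H)$ is extremal, so $j\geq 2$. If $j\geq 3$, every relevant row index is $\geq 3$, so only the purely additive $j\geq 3$ case of Theorem \ref{Betti-cone} is in play: substituting homological index $i+2$ gives $\beta_{i+2,i+2+j}^{S}=\beta_{i,i+j}^{S_H}$, since the two other summands have homological $S_H$-degree $>i$ and vanish; and for every $(k,l)$ with $k\geq i+2$, $l\geq j$, $(k,l)\neq(i+2,j)$, each of the three summands computing $\beta_{k,k+l}^{S}$ vanishes by extremality of $\beta_{i,i+j}^{S_H}$, giving the required corner. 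If $j=2$, Lemma \ref{linearstand-binomial} forces $i=p_H\geq n-1$; in the $j=2$ case of Theorem \ref{Betti-cone} at index $i+2$ the correction terms $(i+1)\binom{n+1}{i+3}$, $(i+1)\sk_{i+2}(H)$, $(i+1)\sk_{i+3}(H)$ all vanish (there are no cliques of size $\geq n+1$ in a graph on $n$ vertices, and $\binom{n+1}{i+3}=0$ as $i+3\geq n+2$) while the two Betti summands beyond $p_H$ vanish, so again $\beta_{i+2,i+4}^{S}=\beta_{i,i+2}^{S_H}\neq 0$; the same vanishing computation, run over $(k,l)$ with $l=2,k\geq i+3$ and with $l\geq 3$, confirms extremality.

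For the converse, assume $\beta_{k,k+l}^{S}(S/J_G)$ is extremal, so $l\geq 2$. When $l\geq 3$ I would transport vanishing in the opposite direction: for $l'\geq 3$ the formula writes $\beta_{k'+2,k'+2+l'}^{S}$ as a sum of nonnegative terms one of which is $\beta_{k',k'+l'}^{S_H}$, so any nonzero $\beta_{k',k'+l'}^{S_H}$ with $k'\geq k-2$, $l'\geq l$, $(k',l')\neq(k-2,l)$ would force a nonzero $\beta_{k'+2,k'+2+l'}^{S}$, violating extremality of $\beta_{k,k+l}^{S}$; this kills all such $S_H$-Betti numbers, whence $\beta_{k,k+l}^{S}=\beta_{k-2,k-2+l}^{S_H}$ and the latter is the desired extremal number. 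When $l=2$, Lemma \ref{linearstand-binomial} gives $k=p_G=p_H+2$, so $k-2=p_H$; the $j=2$ case again collapses (the clique/binomial corrections and the Betti summands beyond $p_H$ all vanish as above) to $\beta_{k,k+2}^{S}=\beta_{p_H,p_H+2}^{S_H}\neq 0$, and comparing $\beta_{p_G,p_G+s}^{S}=\beta_{p_H,p_H+s}^{S_H}$ for $s\geq 3$ shows the top $S_H$-row carries nothing above internal degree $2$, so $\beta_{p_H,p_H+2}^{S_H}$ is extremal.

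The main obstacle is the row $j=2$, where Theorem \ref{Betti-cone} carries the extra corrections $(i-1)\binom{n+1}{i+1}-(i-1)\sk_{i}(H)-(i-1)\sk_{i+1}(H)$; the entire argument hinges on these being inert exactly at the extremal positions. This is precisely where Lemma \ref{linearstand-binomial} (an extremal $j=2$ Betti number must sit at homological degree $\pd$) and the bound $\pd\geq n-1$ from Remark \ref{pd-lower} do the work, since they push the relevant clique sizes past $n$ and the binomial top-arguments past $n+1$, making every correction term vanish. The only other delicate point, possible cancellation in the converse direction, is neutralized by the nonnegativity of Betti numbers, which reduces the transfer of vanishing to a single summand of the formula.
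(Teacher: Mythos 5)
Your proof is correct and follows essentially the same route as the paper: reading both implications off the formula of Theorem \ref{Betti-cone}, using Lemma \ref{linearstand-binomial} to force $j\geq 2$ and to pin the $j=2$ extremal position at the projective dimension, and using Remark \ref{pd-lower} together with the shift $\pd_S(S/J_G)=\pd_{S_H}(S_H/J_H)+2$ from Theorem \ref{cone-depth} to annihilate the row-two correction terms. If anything, you are slightly more explicit than the paper in verifying the corner-vanishing for the $j=2$ cases, which the paper leaves implicit.
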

\begin{proof}
	Let $\beta_{i,i+j}^{S_H}(S_H/J_H)$ be an extremal Betti number of $S_H/J_H$. Since $H$ is not a complete graph, by Lemma \ref{linearstand-binomial}, 
	$j\geq 2$. If $j\geq 3$, then by Theorem \ref{Betti-cone}, $\beta_{i+2,i+2+j}^{S}(S/J_G)=\beta_{i,i+j}^{S_H}(S_H/J_H)$ and for any pair $(r,s)$ with $r\geq i+2$, 
	$s\geq j$ and $(r,s)\neq (i+2,j)$,  $\beta_{r,r+s}^S(S/J_G)=0$. Let $p=\pd_{S_H}(S_H/J_H)$. If $j=2$, then by Lemma \ref{linearstand-binomial}, $\beta_{p,p+2}^{S_H}(S_H/J_H)$ 
	is an extremal Betti number. Therefore it follows from Theorem \ref{Betti-cone} that
	$$ \beta_{p+2,p+4}^{S}(S/J_G)= \beta_{p,p+2}^{S_H}(S_H/J_H) +(p+1) \binom{n+1} {p+3} -(p+1)\sk_{p+2}(H)-(p+1)\sk_{p+3}(H).$$
	By Remark \ref{pd-lower}, $p\geq n-1$, therefore, $\beta_{p+2,p+4}^S(S/J_G)=\beta_{p,p+2}^{S_H}(S_H/J_H)$. Now, let $\beta_{k,k+l}^S(S/J_G)$ is an extremal Betti 
	number. Therefore, by Lemma \ref{linearstand-binomial}, $l \geq 2$. Assume that $l\geq 3$. If possible, $\beta_{k-2,k-2+l}^{S_H}(S_H/J_H)$ is not an extremal Betti number. 
	Thus, there exists $r\geq k-2$ and $s \geq l$ such that $(r,s)\neq (k-2,l)$ and $\beta_{r,r+s}^{S_H}(S_H/J_H)\neq 0$. Therefore, by virtue of Theorem \ref{Betti-cone},
	$\beta_{r+2,r+2+s}^S(S/J_G)\neq 0$ which is a contradiction. Hence, $\beta_{k-2,k-2+l}^{S_H}(S_H/J_H)$ is an extremal Betti number and by Theorem \ref{Betti-cone},
	$\beta_{k,k+l}^S(S/J_G)=\beta_{k-2,k-2+l}^{S_H}(S_H/J_H)$. Now, if $l=2$, then by Lemma \ref{linearstand-binomial}, $k=\pd_S(S/J_G)$. It follows from Theorem \ref{cone-depth} 
	that $k=\pd_{S_H}(S_H/J_H)+2\geq n+1$. Therefore, by  Theorem \ref{Betti-cone},
	\[\beta_{k,k+2}^S\left(\frac{S}{J_G}\right)=\beta_{k-2,k}^{S_H}\left(\frac{S_H}{J_{H}}\right)+(k-1)\binom{n+1}{k+1}-(k-1)(\sk_{k}(H)+\sk_{k+1}(H))=\beta_{k-2,k}^{S_H}\left(\frac{S_H}{J_H}\right).\] Hence, the assertion follows.
\end{proof}
Let $H$ be a connected graph on $[n]$. Also, let $G=K_q*H$. Then by using  Proposition  \ref{extremal-cone}, we conclude that $S_H/J_H$ admits  unique extremal
Betti number if and only if $S/J_G$ admits unique extremal Betti number. In particular, if $\beta_{p,p+r}^{S_H}(S_H/J_H)$ is an extremal Betti number, then 
$\beta_{p+2q,p+2q+r}^S(S/J_G)$ is an extremal and $\beta_{p,p+r}^{S_H}(S_H/J_H)=\beta_{p+2q,p+2q+r}^S(S/J_G)$.

\section{Depth of join of graphs}
In this section, we compute the depth of binomial edge ideal of join of two graphs. Let $G_1$ and $G_2$ be  graphs on $[n_1]$ and $[n_2]$, respectively
with $n_1,n_2 \geq 2$. We assume that both $G_1$ and $G_2$ are not complete. The join of $G_1$ and $G_2$, denoted by $G_1*G_2$  is 
the graph with vertex set $[n_1] \sqcup [n_2]$ and the edge set
$E(G_1*G_2)= E(G_1) \cup E(G_2) \cup \{\{i,j\} : i \in [n_1], j \in [n_2]\}$. Let $G=G_1* G_2$. It follows from 
\cite[Propositions 4.1, 4.5, 4.14]{KM6} that if $P_T(G)$ is a minimal prime of $J_G$ for some $T\subseteq [n_1]\sqcup [n_2]$, then either $T=\emptyset$ or
$[n_1]\subseteq T$ or $[n_2]\subseteq T$. Therefore, by virtue of \cite[Theorem 3.2, Corollary 3.9]{HH1}, we have $$J_G=P_{\emptyset}(G)\cap \big((x_i,y_i:i\in [n_1])+J_{G_2}\big)\cap \big((x_j,y_j:j\in [n_2])+J_{G_1}\big).$$
Set $Q_1=(x_i,y_i:i\in [n_2])+J_{G_1}$, $Q_2=(x_i,y_i:i\in [n_1])+J_{G_2}$ and $Q_3=P_{\emptyset}(G)\cap Q_2$. One can see that 
$Q_2+P_{\emptyset}(G)=(x_i,y_i:i\in [n_1])+J_{K_{n_2}}$ and $Q_1+Q_3=(x_i,y_i:i\in [n_2])+J_{K_{n_1}}.$ Thus, we have the following short exact sequences:
\begin{align}\label{1stseq}
	0\longrightarrow \frac{S}{Q_{3}} \longrightarrow \frac{S}{P_{\emptyset}(G)} \oplus \frac{S}{Q_{2}} \longrightarrow \frac{S}{(x_i,y_i:i\in [n_1])+J_{K_{n_2}}} \longrightarrow 0
\end{align}
and 
\begin{align}\label{2ndseq}
	0\longrightarrow \frac{S}{J_{G}}\longrightarrow \frac{S}{Q_{1}}\oplus \frac{S}{Q_{3}}\longrightarrow \frac{S}{(x_i,y_i:i\in [n_2])+J_{K_{n_1}}} \longrightarrow 0.
\end{align}
Let $S_i = \mathbb{K}[x_j,y_j : j \in [n_i]]$ for $i=1,2$. Observe that  $\depth_S(S/Q_1)=\depth _{S_1}(S_1/J_{G_1})$, $\depth_S(S/(Q_2+P_\emptyset(G)))=n_2+1$, $\depth_S(S/(Q_1+Q_3))=n_1+1$ and 
$\depth_S(S/Q_2)=\depth _{S_2}(S_2/J_{G_2})$. Thus, using Lemma \ref{depth-lemma} in short  exact sequence \eqref{1stseq}, 
\begin{align}\label{depth-Q_3}
	\depth_S(S/Q_3)\geq \min \{\depth_{S_2}(S_2/J_{G_2}),n_2+2\}
\end{align} and hence from 
the exact sequence \eqref{2ndseq} that
\begin{align}\label{depthresult}
	\depth_S(S/J_G)\geq \min \{\depth_{S_1}(S_1/J_{G_1}),\depth_{S_2}(S_2/J_{G_2}),n_1+2,n_2+2\}.
\end{align}
First, we give exact formula for depth of binomial edge ideal of join of two connected graphs.
\begin{theorem}\label{depththm2}
	Let $G=G_1* G_2$ be join of $G_1$ and $G_2$, where $G_1$ and $G_2$ be two connected graphs on vertex sets $[n_1]$ and $[n_2]$ respectively.
	Then \[\depth_S(S/J_G)=\displaystyle{\min_{i ={1,2}}\depth_{S_i}(S_i/J_{G_i})}.\]
\end{theorem}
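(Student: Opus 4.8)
The plan is to prove the two inequalities separately. The lower bound is essentially already in place, and the real content is a matching upper bound. By the symmetry of the join construction (the roles of $G_1$ and $G_2$ in \eqref{1stseq}, \eqref{2ndseq} are interchangeable), I may assume $d_1 := \depth_{S_1}(S_1/J_{G_1}) \le d_2 := \depth_{S_2}(S_2/J_{G_2})$, so the target is $\depth_S(S/J_G) = d_1$. For the lower bound I would simply invoke \eqref{depthresult} together with Remark \ref{pd-lower}: since $G_1,G_2$ are connected and not complete, $d_i \le n_i+1 < n_i+2$, so the terms $n_1+2,n_2+2$ in \eqref{depthresult} are irrelevant and $\depth_S(S/J_G) \ge \min\{d_1,d_2\} = d_1$.

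The substance is the reverse inequality $\depth_S(S/J_G) \le d_1$, equivalently $\pd_S(S/J_G) \ge q$ where $q := 2(n_1+n_2)-d_1$. I would read this off the long exact $\Tor(-,\KK)$-sequence attached to \eqref{2ndseq}, aiming to exhibit a single nonvanishing graded Betti number $\beta_{q,q+s}^S(S/J_G)$ with $s \ge 2$. Setting $p_1 := \pd_{S_1}(S_1/J_{G_1}) = 2n_1-d_1$, the module $S/Q_1 = S_1/J_{G_1}$ (with the $2n_2$ variables on $[n_2]$ killed) has minimal free resolution the Koszul complex on those $2n_2$ variables tensored with that of $S_1/J_{G_1}$, so $\pd_S(S/Q_1) = 2n_2 + p_1 = q$; thus the middle term $S/Q_1 \oplus S/Q_3$ already carries a nonzero $\Tor$ in homological degree $q$, coming from $S/Q_1$. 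The key step is to control the right-hand term $P := S/(Q_1+Q_3) = S/\big((x_i,y_i:i\in[n_2])+J_{K_{n_1}}\big)$: because $K_{n_1}$ is complete, $S_1/J_{K_{n_1}}$ has a linear resolution, so after the same Koszul tensoring every top-strand Betti number of $P$ lies on the linear strand, giving $\beta_{q,q+s}^S(P)=0$ for all $s\ge 2$, while $\pd_S(P) = n_1+2n_2-1 \le q$ forces $\Tor_{q+1}^S(P,\KK)=0$. Hence in each internal degree $q+s$ with $s\ge 2$ the connecting maps vanish and $\Tor_{q,q+s}^S(S/J_G,\KK) \cong \Tor_{q,q+s}^S(S/Q_1 \oplus S/Q_3,\KK) \supseteq \Tor_{q,q+s}^S(S/Q_1,\KK)$.

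Finally I would feed in the non-completeness of $G_1$ through Lemma \ref{linearstand-binomial}: we have $\beta_{p_1,p_1+1}^{S_1}(S_1/J_{G_1})=0$ while $\beta_{p_1,\cdot}^{S_1}(S_1/J_{G_1})\neq 0$, so $\beta_{p_1,p_1+s}^{S_1}(S_1/J_{G_1})\neq 0$ for some $s\ge 2$. By the tensor-product formula \eqref{Bettiproduct} this yields $\beta_{q,q+s}^S(S/Q_1)=\beta_{p_1,p_1+s}^{S_1}(S_1/J_{G_1})\neq 0$, and therefore $\beta_{q,q+s}^S(S/J_G)\neq 0$. Consequently $\pd_S(S/J_G)\ge q$, so by Auslander--Buchsbaum $\depth_S(S/J_G)\le 2(n_1+n_2)-q = d_1$, which combined with the lower bound gives $\depth_S(S/J_G)=d_1=\min\{d_1,d_2\}$.

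I expect the main obstacle to be precisely the potential cancellation in the long exact sequence at homological degree $q$: a priori the top $\Tor$ coming from $S/Q_1$ could map injectively into $\Tor_q(P)$ and die. This is a genuine danger exactly when $d_1=n_1+1$, for then $q=\pd_S(P)$ and the two modules share their top homological degree. The device that rescues the argument is the graded refinement above: comparing \emph{internal} degrees and using that the non-complete graph $G_1$ pushes its top Betti numbers off the linear strand (internal degree $\ge p_1+2$) whereas the complete graph $K_{n_1}$ contributes only on it (internal degree $p_1+1$). This degree separation is what lets the relevant $\Tor$ of $S/J_G$ survive, so that one never has to analyze the connecting homomorphism explicitly, and it is the same non-completeness hypothesis, via Lemma \ref{linearstand-binomial}, that drives both the choice of $q$ and the survival of $\beta_{q,q+s}^S(S/J_G)$.
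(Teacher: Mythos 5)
Your proof is correct, and although it is built from the same raw materials as the paper's --- the sequence \eqref{2ndseq}, the lower bound \eqref{depthresult}, Lemma \ref{linearstand-binomial}, and a graded long exact sequence of $\Tor$ --- it is organized along a genuinely different and leaner line. The paper's proof is a case analysis: when $\min_i\depth_{S_i}(S_i/J_{G_i})<n_1+1$ it applies Lemma \ref{depth-lemma}(iii) to \eqref{2ndseq}, which requires knowing the \emph{exact} value $\depth_S(S/Q_3)=\depth_{S_2}(S_2/J_{G_2})$, and establishing that equality costs a separate Tor argument on \eqref{1stseq} in the subcase $\depth_{S_2}(S_2/J_{G_2})=n_2+1$; the Tor argument on \eqref{2ndseq} is then deployed only in the extremal case, at homological degree $n_1+2n_2-1$. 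You instead run a single Tor computation at the general degree $q=2(n_1+n_2)-d_1$, where $d_i=\depth_{S_i}(S_i/J_{G_i})$ (your WLOG $d_1\le d_2$ is legitimate since $G_1*G_2=G_2*G_1$): since $\pd_S(S/(Q_1+Q_3))=n_1+2n_2-1\le q$ and $\reg(S/(Q_1+Q_3))=1$, both outer terms of the relevant four-term segment vanish in internal degrees $q+s$ with $s\ge 2$, so $\Tor_{q,q+s}^{S}(S/J_G,\KK)$ inherits the nonzero top Betti number $\beta_{p_1,p_1+s}^{S_1}(S_1/J_{G_1})$, $p_1=\pd_{S_1}(S_1/J_{G_1})$, supplied by Lemma \ref{linearstand-binomial} via \eqref{Bettiproduct}. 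This collapses the case split and never needs more about $S/Q_3$ than the cheap bound \eqref{depth-Q_3}. What the paper's longer route buys is the exact statement about $\depth_S(S/Q_3)$, which is not wasted in its economy: the proof of Lemma \ref{tech-lemma2} cites it to get $\pd_S(S/Q_3)=p_2+2n_1$, feeding into Theorem \ref{exist}. As a standalone proof of Theorem \ref{depththm2}, though, your uniform degree-separation argument --- non-complete graphs push their top Betti numbers off the linear strand while $K_{n_1}$ contributes only on it --- is the paper's own mechanism applied at the right level of generality.
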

\begin{proof}
	First we prove that  $\depth_{S}(S/Q_3)=\depth_{S_2}(S_2/J_{G_2})$. If $\depth_{S_2}(S_2/J_{G_2})<n_2+1$, it follows from short exact sequence \eqref{1stseq} and Lemma \ref{depth-lemma} 
	that $\depth_{S}(S/Q_3)=\depth_{S_2}(S_2/J_{G_2})$. Now, we assume that $\depth_{S_2}(S_2/J_{G_2})=n_2+1$, by Auslander-Buchsbaum formula, 
	$\pd_{S_2}(S_2/J_{G_2})=n_2-1$. Since, $G_2$ is not a complete graph, by Lemma \ref{linearstand-binomial}, there exists $j \geq 2$ such that 
	$\beta_{n_2-1,n_2-1+j}^{S_2}(S_2/J_{G_2}) \neq 0$ which  implies that $\beta_{p,p+j}^{S}(S/Q_2) \neq 0$, where $p=2n_1+n_2-1$. Note that $\pd_S(S/P_{\emptyset}(G))=n_1+n_2-1$ and
	$\pd_S(S/Q_2)=2n_1+n_2-1=\pd_S(S/((x_i,y_i: i \in [n_1])+J_{K_{n_2}}))$. Now consider the long exact 
	sequence of Tor  in homological degree $p$ corresponding to the short exact sequence \eqref{1stseq}
	\begin{align}\label{Tor-depth}
		0 \rightarrow \Tor_{p,p+j}^{S}\left( \frac{S}{Q_3},\KK\right)\rightarrow \Tor_{p,p+j}^{S}\left( \frac{S}{Q_2},\KK\right) \rightarrow  \Tor_{p,p+j}^{S}\left(\frac{S}{P_{\emptyset}(G)+Q_2},\KK\right) \rightarrow  \cdots
	\end{align}
	Since $j\geq 2$, $\beta_{p,p+j}^S(S/(P_{\emptyset}(G)+Q_2))= 0$, which further implies that $\beta_{p,p+j}^S(S/Q_3)\neq 0$. Thus,
	$\pd_S(S/Q_3)\geq p$ and hence $\depth_S(S/Q_3)\leq 2n_1+2n_2-p=n_2+1$. Hence, by \eqref{depth-Q_3},
	$\depth_S(S/Q_3)=n_2+1=\depth_{S_2}(S_2/J_{G_2})$.  
	
	Now, if $\min\{\depth_{S_i}(S_i/J_{G_i}): i=1,2\}<n_1+1$, then using Lemma \ref{depth-lemma} in short exact sequence \eqref{2ndseq}, we get the desired 
	result. Otherwise, by Remark \ref{pd-lower}, we have $\depth_{S_1}(S_1/J_{G_1})=n_1+1=\min\{\depth_{S_i}(S_i/J_{G_i}): i=1,2\}$ and therefore
	$\pd_{S_1}(S_1/J_{G_1})=n_1-1$.
	Since, $G_1$ is not a complete graph, by Lemma \ref{linearstand-binomial}, there exists $l \geq 2$ such that $\beta_{n_1-1,n_1-1+l}^{S_1}(S_1/J_{G_1}) \neq 0$
	which further implies that $\beta_{n_1+2n_2-1,n_1+2n_2-1+l}^{S}(S/Q_1) \neq 0$. Note that, $\pd_S(S/Q_1)=n_1+2n_2-1=\pd_S(S/((x_i,y_i: i \in [n_2])+J_{K_{n_1}}))$.
	The long exact sequence of Tor in homological degree $q=n_1+2n_2-1$ and graded degree $q+l$ corresponding to \eqref{2ndseq} is
	\begin{align}\label{Tor-depth2}
		0 \rightarrow \Tor_{q,q+l}^{S}\left( \frac{S}{J_G},\KK\right)\rightarrow \Tor_{q,q+l}^{S}\left( \frac{S}{Q_1},\KK\right)\oplus  \Tor_{q,q+l}^{S}\left( \frac{S}{Q_3},\KK\right)\rightarrow  0.
	\end{align}
	Since,  $\beta_{q,q+l}^{S}(S/Q_1) \neq 0$, we have $\beta_{q,q+l}^S(S/J_G) \neq 0$. Therefore, $\pd_S(S/J_G)\geq q$ and hence $\depth_S(S/J_G)\leq 2n_1+2n_2-q=n_1+1$.
	It follows from \eqref{depthresult} that $\depth_S(S/J_G)\geq n_1+1$. Hence, the desired result follows.	
\end{proof}
We now illustrate our result by the following example. A \textit{block} of a graph is a  maximal nontrivial connected  subgraph with no cut vertex. A connected graph is said to be a \textit{block graph} if every block of that graph is a complete graph.

\begin{example}
	If $G_1$ be a connected block graph and $G_2=C_{n_2}$ with $n_1\geq n_2 \geq 4$, then by virtue of \cite[Theorem 1.1]{her1} $\depth_{S_1}(S_1/J_{G_1})=n_1+1$. 
	By   \cite[Corollary 16]{SZ} that $\depth_{S_2}(S_2/J_{G_2})=n_2$. Hence,  $\depth_S(S/J_{G_1* G_2})=\depth_{S_2}(S_2/J_{G_2})=n_2$.
\end{example} 

Now, we move on to study the join of a connected graph and a disconnected graph.
\begin{theorem}\label{depththm1}
	Let $G_1$ be a connected graph on the vertex set $[n_1]$ and $G_2$ be a disconnected graph on the vertex set $[n_2]$. Then  
	\[\depth_S(S/J_G)=\min\{\depth_{S_1}(S_1/J_{G_1}),\depth_{S_2}(S_2/J_{G_2}),n_2+2\}.\]
\end{theorem}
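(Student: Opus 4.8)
The plan is to follow the two-step strategy already used for Theorem \ref{depththm2}, the only genuinely new input being that the disconnectedness of $G_2$ is what forces the extra term $n_2+2$. The short exact sequences \eqref{1stseq} and \eqref{2ndseq}, the modules $Q_1,Q_2,Q_3$, and the lower bounds \eqref{depth-Q_3} and \eqref{depthresult} are all available and form the backbone of the argument.

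First I would pin down $\depth_S(S/Q_3)$, claiming that
\[\depth_S(S/Q_3)=\min\{\depth_{S_2}(S_2/J_{G_2}),\,n_2+2\}.\]
The lower bound is exactly \eqref{depth-Q_3}. For the upper bound I split into cases according to the value of $\depth_{S_2}(S_2/J_{G_2})$, compared with $n_2+1=\depth_S(S/(Q_2+P_\emptyset(G)))$ inside \eqref{1stseq}. When $\depth_{S_2}(S_2/J_{G_2})\neq n_2+1$, parts (ii)/(iii) of Lemma \ref{depth-lemma} settle the value at once (using $\depth_S(S/P_\emptyset(G))=n_1+n_2+1\geq n_2+2$). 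The delicate case is $\depth_{S_2}(S_2/J_{G_2})=n_2+1$, i.e. $\pd_{S_2}(S_2/J_{G_2})=n_2-1$. Here, since $G_2$ is disconnected, I invoke Remark \ref{linearstand-binomial3} to get $\beta_{n_2-1,n_2}^{S_2}(S_2/J_{G_2})=0$, so there is some $j\geq 2$ with $\beta_{n_2-1,n_2-1+j}^{S_2}(S_2/J_{G_2})\neq 0$. By \eqref{Bettiproduct} this yields $\beta_{p,p+j}^S(S/Q_2)\neq 0$ for $p=2n_1+n_2-1=\pd_S(S/Q_2)$. Reading the top of the long exact Tor-sequence of \eqref{1stseq} in homological degree $p$ — where $\Tor_{p+1}(S/(Q_2+P_\emptyset(G)))=0$, $\beta_{p,p+j}^S(S/P_\emptyset(G))=0$ because $\pd_S(S/P_\emptyset(G))=n_1+n_2-1<p$, and $\beta_{p,p+j}^S(S/(Q_2+P_\emptyset(G)))=0$ for $j\geq 2$ by regularity one — forces $\beta_{p,p+j}^S(S/Q_3)\neq 0$, whence $\depth_S(S/Q_3)\leq n_2+1$ and equality follows.

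With $\depth_S(S/Q_3)$ in hand I feed it into \eqref{2ndseq}. Writing $d:=\min\{\depth_{S_1}(S_1/J_{G_1}),\depth_{S_2}(S_2/J_{G_2}),n_2+2\}$, one gets $\depth_S(S/Q_1\oplus S/Q_3)=d$ and $\depth_S(S/(Q_1+Q_3))=n_1+1$. Since $\depth_{S_1}(S_1/J_{G_1})\leq n_1+1$ by Remark \ref{pd-lower}, we always have $d\leq n_1+1$, so the case $d>n_1+1$ of Lemma \ref{depth-lemma} cannot arise. If $d<n_1+1$, part (iii) of Lemma \ref{depth-lemma} gives $\depth_S(S/J_G)=d$ immediately. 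The remaining boundary case is $d=n_1+1$, which forces $\depth_{S_1}(S_1/J_{G_1})=n_1+1$ and $\pd_{S_1}(S_1/J_{G_1})=n_1-1$; here I use that $G_1$ is connected and not complete to apply Lemma \ref{linearstand-binomial}, obtaining $l\geq 2$ with $\beta_{n_1-1,n_1-1+l}^{S_1}(S_1/J_{G_1})\neq 0$, hence $\beta_{q,q+l}^S(S/Q_1)\neq 0$ for $q=n_1+2n_2-1=\pd_S(S/Q_1)=\pd_S(S/(Q_1+Q_3))$. The Tor-sequence of \eqref{2ndseq} in degree $q$ then gives $\Tor_{q,q+l}^S(S/J_G)\cong \Tor_{q,q+l}^S(S/Q_1)\oplus\Tor_{q,q+l}^S(S/Q_3)\neq 0$, since the term $\beta_{q,q+l}^S(S/(Q_1+Q_3))$ vanishes for $l\geq 2$ by regularity one; thus $\depth_S(S/J_G)\leq n_1+1$, and combined with part (i) this yields $\depth_S(S/J_G)=n_1+1=d$.

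The main obstacle is precisely the two coincidence cases, where the depths of the middle and right terms of the sequences agree and Lemma \ref{depth-lemma} is inconclusive; both are resolved by producing an explicit nonzero top graded Betti number through \eqref{Bettiproduct} and the Tor-sequences. The one substantive departure from the proof of Theorem \ref{depththm2} is that the non-vanishing of $\beta_{n_2-1,n_2-1+j}^{S_2}(S_2/J_{G_2})$ must be supplied by Remark \ref{linearstand-binomial3} rather than by Lemma \ref{linearstand-binomial}, since $G_2$ is now disconnected; this same feature is what introduces the extra $n_2+2$ into the formula, because $\depth_{S_2}(S_2/J_{G_2})$ can exceed $n_2+1$ for a disconnected graph, in which case Step~1 produces $\depth_S(S/Q_3)=n_2+2$.
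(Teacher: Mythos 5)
Your proposal is correct and follows essentially the same route as the paper's own proof: the same two-step analysis of \eqref{1stseq} and \eqref{2ndseq} via Lemma \ref{depth-lemma}, with the two coincidence cases ($\depth_{S_2}(S_2/J_{G_2})=n_2+1$ and $d=n_1+1$) resolved exactly as in the paper by producing a nonzero top graded Betti number through \eqref{Bettiproduct} --- using Remark \ref{linearstand-binomial3} for the disconnected $G_2$ and Lemma \ref{linearstand-binomial} for the connected, non-complete $G_1$ --- and then reading the Tor long exact sequences with the terms of regularity one vanishing in degrees $\geq 2$. Your identification $\Tor_{q,q+l}^{S}\left(\frac{S}{J_G},\KK\right)\simeq \Tor_{q,q+l}^{S}\left(\frac{S}{Q_1},\KK\right)\oplus \Tor_{q,q+l}^{S}\left(\frac{S}{Q_3},\KK\right)$ is precisely the paper's sequence \eqref{Tor-depth2}, so there is no substantive difference.
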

\begin{proof}
	We claim  that  $\depth_{S}(S/Q_3)=\min\{\depth_{S_2}(S_2/J_{G_2}),n_2+2\}$. First assume that $n_2+1 <\depth_{S_2}(S_2/J_{G_2})$. Therefore, the 
	claim follows from the short exact sequence \eqref{1stseq} and Lemma \ref{depth-lemma}. If $n_2+1>\depth_{S_2}(S_2/J_{G_2})$, it follows from short exact sequence
	\eqref{1stseq} that $\depth_{S}(S/Q_3)=\depth_{S_2}(S_2/J_{G_2})$. Now, we assume that $\depth_{S_2}(S_2/J_{G_2})=n_2+1$ which implies that 
	$\pd_{S_2}(S_2/J_{G_2})=n_2-1$. Since, $G_2$ is a disconnected graph, by Remark \ref{linearstand-binomial3}, there exists $j \geq 2$ such that
	$\beta_{n_2-1,n_2-1+j}^{S_2}(S_2/J_{G_2}) \neq 0$ which further implies that $\beta_{p,p+j}^{S}(S/Q_2) \neq 0$, where $p=2n_1+n_2-1$. Note that 
	$\pd_S(S/P_{\emptyset}(G))=n_1+n_2-1$, $\pd_S(S/Q_2)=2n_1+n_2-1=\pd_S(S/((x_i,y_i: i \in [n_1])+J_{K_{n_2}}))$. Now, consider the long exact sequence of Tor \eqref{Tor-depth}.
	Since $\beta_{p,p+j}^{S}(S/Q_2)\neq 0$, we have that $\beta^S_{p,p+j}(S/Q_3)\neq 0$. Thus, $\pd_S(S/Q_3)\geq p$ and hence $\depth_S(S/Q_3)\leq 2n_1+2n_2-p=n_2+1$. 
	Therefore, by \eqref{depth-Q_3}, $\depth_S(S/Q_3)=n_2+1=\depth_{S_2}(S_2/J_{G_2})$. Hence, we have	\[\depth_{S}(S/Q_3)=\min\{\depth_{S_2}(S_2/J_{G_2}),n_2+2\}.\]
	
	Now, if $\min\{\depth_{S_1}(S_1/J_{G_1}),\depth_{S_2}(S_2/J_{G_2}),n_2+2\}<n_1+1$, then by applying Lemma \ref{depth-lemma} in short exact sequence \eqref{2ndseq}, 
	we get the desired result. Otherwise, we have $\min\{\depth_{S_1}(S_1/J_{G_1}),\depth_{S_2}(S_2/J_{G_2}),n_2+2\}\geq n_1+1$ and hence by Remark \ref{pd-lower}, 
	$\depth_{S_1}(S_1/J_{G_1})=n_1+1$. Therefore $\pd_{S_1}(S_1/J_{G_1})=n_1-1$.
	Since, $G_1$ is not a complete graph, by Lemma \ref{linearstand-binomial}, there exists $l \geq 2$ such that $\beta_{n_1-1,n_1-1+l}^{S_1}(S_1/J_{G_1}) \neq 0$ which 
	implies that $\beta_{q,q+l}^{S}(S/Q_1) \neq 0$, where $q=n_1+2n_2-1$. Note  that $\pd_S(S/Q_1)=q=\pd_S(S/((x_i,y_i: i \in [n_2])+J_{K_{n_1}}))$.
	Consider, the long exact sequence of Tor \eqref{Tor-depth2}.
	Since,  $\beta_{q,q+l}^{S}(S/Q_1) \neq 0$, we have $\beta_{q,q+l}^S(S/J_G) \neq 0$. Therefore, $\pd_S(S/J_G)\geq q$ and hence $\depth_S(S/J_G)\leq 2n_1+2n_2-q=n_1+1$. 
	It follows from \eqref{depthresult} that $\depth_S(S/J_G)\geq n_1+1$. Hence, the assertion follows.
\end{proof}	
We now compute the depth of the binomial edge ideal of join of two disconnected  graphs. 
\begin{theorem}\label{depththm3}
	Let $G=G_1*G_2$ be the join of $G_1$ and $G_2$, where $G_1$ and $G_2$ are disconnected graphs on $[n_1]$ and $[n_2]$ respectively. Assume that $n_2 \geq n_1$. Then
	\[\depth_S(S/J_G)=\min\{\depth_{S_1}(S_1/J_{G_1}),\depth_{S_2}(S_2/J_{G_2}),n_1+2\}.\]  
\end{theorem}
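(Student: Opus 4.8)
The plan is to mirror the structure of the proofs of Theorems \ref{depththm2} and \ref{depththm1}, using the two short exact sequences \eqref{1stseq} and \eqref{2ndseq} together with the general depth inequality \eqref{depthresult}. As in the connected case, the argument splits into first pinning down $\depth_S(S/Q_3)$ precisely, and then feeding that into sequence \eqref{2ndseq} to read off $\depth_S(S/J_G)$. The overall lower bound $\depth_S(S/J_G)\geq \min\{\depth_{S_1}(S_1/J_{G_1}),\depth_{S_2}(S_2/J_{G_2}),n_1+2,n_2+2\}$ is already available from \eqref{depthresult}, and since $n_2\geq n_1$ the term $n_2+2$ is redundant; so the real work is to produce a matching upper bound under the assumption that the minimum equals $n_1+1$.

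First I would establish the claim $\depth_S(S/Q_3)=\min\{\depth_{S_2}(S_2/J_{G_2}),n_2+2\}$. The two easy regimes, $\depth_{S_2}(S_2/J_{G_2})>n_2+1$ and $\depth_{S_2}(S_2/J_{G_2})<n_2+1$, follow verbatim from \eqref{1stseq} and Lemma \ref{depth-lemma} exactly as in the proof of Theorem \ref{depththm1}, since those steps never used connectivity of $G_1$. The boundary case $\depth_{S_2}(S_2/J_{G_2})=n_2+1$ is handled using Remark \ref{linearstand-binomial3}: since $G_2$ is disconnected with $\pd_{S_2}(S_2/J_{G_2})=n_2-1$, there is a $j\geq 2$ with $\beta_{n_2-1,n_2-1+j}^{S_2}(S_2/J_{G_2})\neq 0$, hence $\beta_{p,p+j}^S(S/Q_2)\neq 0$ for $p=2n_1+n_2-1$. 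Feeding this into the long exact sequence \eqref{Tor-depth} (noting $\beta_{p,p+j}^S(S/(P_\emptyset(G)+Q_2))=0$ because $j\geq 2$) forces $\beta_{p,p+j}^S(S/Q_3)\neq 0$, so $\depth_S(S/Q_3)\leq n_2+1$, and combined with \eqref{depth-Q_3} we get equality.

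Next I would plug this into \eqref{2ndseq}. If $\min\{\depth_{S_1}(S_1/J_{G_1}),\depth_{S_2}(S_2/J_{G_2}),n_1+2\}<n_1+1$, Lemma \ref{depth-lemma} applied to \eqref{2ndseq} gives the result immediately. The remaining case is when this minimum equals exactly $n_1+1$, which by Remark \ref{pd-lower} forces $\depth_{S_1}(S_1/J_{G_1})=n_1+1$ and thus $\pd_{S_1}(S_1/J_{G_1})=n_1-1$. Here I would again use Remark \ref{linearstand-binomial3} (now for the disconnected graph $G_1$) to find $l\geq 2$ with $\beta_{n_1-1,n_1-1+l}^{S_1}(S_1/J_{G_1})\neq 0$, so that $\beta_{q,q+l}^S(S/Q_1)\neq 0$ for $q=n_1+2n_2-1$; then the isomorphism coming from \eqref{Tor-depth2} yields $\beta_{q,q+l}^S(S/J_G)\neq 0$, giving $\depth_S(S/J_G)\leq n_1+1$. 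Combined with \eqref{depthresult} this closes the case.

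The main subtlety, and the place where the hypothesis $n_2\geq n_1$ is genuinely used, is ensuring that the homological degree $q=n_1+2n_2-1$ at which I detect a nonzero Betti number of $S/J_G$ is not masked by the other summand $S/Q_3$ in \eqref{2ndseq}. Since $\pd_S(S/Q_1)=n_1+2n_2-1=\pd_S(S/((x_i,y_i:i\in[n_2])+J_{K_{n_1}}))$, the sequence \eqref{2ndseq} at homological degree $q$ collapses to the short exact sequence \eqref{Tor-depth2}, making the transfer of the nonvanishing Betti number immediate; I would check carefully that $q$ exceeds $\pd_S(S/Q_3)$, which is where $n_2\geq n_1$ enters, so that the map into the cokernel term has no obstruction. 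This bookkeeping on projective dimensions is the only delicate point; the rest is a faithful adaptation of the two preceding theorems with Lemma \ref{linearstand-binomial} replaced by Remark \ref{linearstand-binomial3} at each invocation.
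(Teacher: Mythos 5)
Your reduction to the boundary case breaks at the step ``which by Remark \ref{pd-lower} forces $\depth_{S_1}(S_1/J_{G_1})=n_1+1$'': Remark \ref{pd-lower} is a statement about \emph{connected} graphs, and in this theorem $G_1$ is disconnected, so $\depth_{S_1}(S_1/J_{G_1})$ can strictly exceed $n_1+1$ (for instance, if $G_1$ is a disjoint union of two Cohen--Macaulay connected graphs on $a$ and $b$ vertices with $a+b=n_1$, the depth is $(a+1)+(b+1)=n_1+2$). Consequently the case $\min\{\depth_{S_1}(S_1/J_{G_1}),\depth_{S_2}(S_2/J_{G_2}),n_1+2\}=n_1+1$ splits into two subcases, and you treat only one. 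The subcase you omit is $\depth_{S_1}(S_1/J_{G_1})>n_1+1$ while $\depth_S(S/Q_3)=n_1+1$ (equivalently $\depth_{S_2}(S_2/J_{G_2})=n_1+1$, which is possible precisely because $n_1+1\le n_2+1$); there the nonvanishing Betti number in homological degree $q=n_1+2n_2-1$ must be extracted from the summand $S/Q_3$ in \eqref{Tor-depth2}, not from $S/Q_1$. The paper does this by noting $\pd_S(S/Q_3)=q=\pd_S(S/Q_2)$, distinguishing whether $G_2$ has at least two nontrivial components (Lemma \ref{linearstand-binomial2}) or exactly one nontrivial component which is not complete (Lemma \ref{linearstand-binomial}) to produce $j\ge 2$ with $\beta^{S_2}_{2n_2-n_1-1,2n_2-n_1-1+j}(S_2/J_{G_2})\neq 0$, and then using the tail $\Tor_{q,q+j}^{S}\left(\frac{S}{Q_3},\KK\right)\rightarrow \Tor_{q,q+j}^{S}\left(\frac{S}{Q_2},\KK\right)\rightarrow 0$ of the long exact sequence of \eqref{1stseq} to transfer the nonvanishing to $S/Q_3$. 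None of this dichotomy on the components of $G_2$ appears in your proposal, and it is the only genuinely new ingredient of this theorem relative to Theorem \ref{depththm1}, whose proof you otherwise adapt correctly.

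Two smaller defects. First, your case analysis also skips $\min\{\depth_{S_1}(S_1/J_{G_1}),\depth_{S_2}(S_2/J_{G_2}),n_1+2\}=n_1+2$, which does occur for the same reason (both depths may exceed $n_1+1$); it is immediate, though, since Lemma \ref{depth-lemma}(ii) applied to \eqref{2ndseq} gives $\depth_S(S/J_G)=n_1+2$, as in the paper's final sentence. Second, your diagnosis of the ``main subtlety'' is misplaced: in homological degree $q$ and graded degree $q+l$ with $l\ge 2$, the sequence \eqref{Tor-depth2} is an isomorphism $\Tor_{q,q+l}^S\left(\frac{S}{J_G},\KK\right)\simeq \Tor_{q,q+l}^S\left(\frac{S}{Q_1},\KK\right)\oplus \Tor_{q,q+l}^S\left(\frac{S}{Q_3},\KK\right)$, so nonvanishing of \emph{either} summand transfers and no ``masking'' can occur; in particular there is no need for $q$ to exceed $\pd_S(S/Q_3)$, and in the omitted subcase one has $\pd_S(S/Q_3)=q$ exactly. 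The hypothesis $n_2\ge n_1$ enters instead by forcing $\min\{\depth_{S_2}(S_2/J_{G_2}),n_2+2\}=n_1+1$ to mean $\depth_{S_2}(S_2/J_{G_2})=n_1+1\le n_2+1$, which is what makes the component analysis of $G_2$ available.
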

\begin{proof}
	It follows from the proof of Theorem \ref{depththm1} that \[\depth_{S}(S/Q_3)=\min\{\depth_{S_2}(S_2/J_{G_2}),n_2+2\}.\]
	
	Now, if $\min\{\depth_{S_1}(S_1/J_{G_1}),\depth_{S_2}(S_2/J_{G_2}),n_2+2\}<n_1+1$, then using Lemma \ref{depth-lemma} in short exact sequence \eqref{2ndseq}, we get the desired result.
	
	If $\min\{\depth_{S_1}(S_1/J_{G_1}),\depth_{S_2}(S_2/J_{G_2}),n_2+2\}=n_1+1$, then either $\depth_{S_1}(S_1/J_{G_1})=n_1+1$ or $\depth_S(S/Q_3)=n_1+1$. Now, if $\depth_{S_1}(S_1/J_{G_1})=n_1+1$,
	then by Auslander-Buchsbaum formula, $\pd_{S_1}(S_1/J_{G_1})=n_1-1$. Therefore, by virtue of Remark \ref{linearstand-binomial3} there exists $j \geq 2$ such that
	$\beta_{n_1-1,n_1-1+j}^{S_1}(S_1/J_{G_1})\neq 0$  which  implies that $\beta_{q,q+j}^{S}(S/Q_1) \neq 0$, where $q=n_1+2n_2-1$. Note  that $\pd_S(S/Q_1)=q=\pd_S(S/((x_i,y_i: i \in [n_2])+J_{K_{n_1}}))$.
	Consider, the long exact sequence of Tor \eqref{Tor-depth2} in graded degree $q+j$.
	Since,  $\beta_{q,q+j}^{S}(S/Q_1) \neq 0$, we have $\beta_{q,q+j}^S(S/J_G) \neq 0$. Therefore, $\pd_S(S/J_G)\geq q$ and hence $\depth_S(S/J_G)\leq 2n_1+2n_2-q=n_1+1$. Now, the assertion
	follows from \eqref{depthresult}.  Assume  now that  $\depth_S(S/Q_3)=n_1+1$. Since, $n_1\leq n_2$, $\depth_S(S/Q_3)=n_1+1=\depth_{S_2}(S_2/J_{G_2})=\depth_S(S/Q_2)$.  Note that $\pd_S(S/Q_3) =q =\pd_S(S/Q_2)$.
	Since, $G_2$ is a disconnected graph and $\depth_{S_2}(S_2/J_{G_2})=n_1+1 \leq n_2+1$, either $G_2$ has atleast two nontrivial components or $G_2$ has one nontrivial component which is not complete. In first case, by Lemma \ref{linearstand-binomial2}, there exists $j \geq 2$ such that $\beta_{2n_2-n_1-1,2n_2-n_1-1+j}^{S_2}(S_2/J_{G_2}) \neq 0$ which
	further implies that $\beta_{q,q+j}^{S}(S/Q_2) \neq 0$. If $G_2$ has exactly one nontrivial component say $H$, then $\pd_{S_2}(S_2/J_{G_2})=\pd_{S_H}(S_H/J_{H})=2n_2-n_1-1$, where $S_H=\KK[x_j,y_j : j \in V(H)]$. Now, by Lemma \ref{linearstand-binomial}, there exists $j \geq 2$ such that $\beta_{2n_2-n_1-1,2n_2-n_1-1+j}^{S_2}(S_2/J_{G_2}) \neq 0$ which
	further implies that $\beta_{q,q+j}^{S}(S/Q_2) \neq 0$. The long exact sequence of Tor corresponding to \eqref{1stseq} in homological degree $q$ and graded degree $q+j$ is
	\begin{align*}
		\cdots \rightarrow \Tor_{q,q+j}^{S}\left( \frac{S}{Q_3},\KK\right)\rightarrow \Tor_{q,q+j}^{S}\left( \frac{S}{Q_2},\KK\right) \rightarrow  0.
	\end{align*}
	Therefore, $\beta_{q,q+j}^S(S/Q_3)\neq 0$. Thus, it follows from  \eqref{Tor-depth2} that $\beta_{q,q+j}^S(S/J_G)\neq 0$. Therefore, $\pd_S(S/J_G)\geq q$ and hence,
	$\depth_S(S/J_G) \leq 2n_1+2n_2-q=n_1+1$. Now, along with \eqref{depthresult}, we get the assertion.
	
	Also, if $\min\{\depth_{S_1}(S_1/J_{G_1}),\depth_{S_2}(S_2/J_{G_2}),n_2+2\}> n_1+1$, then again using Lemma \ref{depth-lemma} in the short exact sequence \eqref{2ndseq}, $\depth_S(S/J_G)=n_1+2$.  Hence, the desired result follows.
\end{proof}	
As an immediate consequence, we obtain the depth of complete multipartite graph.
\begin{corollary}\label{com-bipartite}
	Let $G=K_{n_1,\cdots,n_k}$ be a complete multipartite graph with $2 \leq n_1\leq \cdots \leq n_k$. Then $\depth_S(S/J_G)= n_1+2.$
\end{corollary}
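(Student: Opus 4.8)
The plan is to exhibit $K_{n_1,\dots,n_k}$ as an iterated join of edgeless graphs and then apply the depth formulas of Theorems \ref{depththm1} and \ref{depththm3} by induction on $k\ge 2$. Writing $\overline{K_m}$ for the graph on $m$ vertices with no edges, the defining property of a complete multipartite graph---vertices in the same part are non-adjacent, vertices in different parts are adjacent---gives $K_{n_1,\dots,n_k}=\overline{K_{n_1}}*\cdots*\overline{K_{n_k}}$, and in particular $K_{n_1,\dots,n_k}=K_{n_2,\dots,n_k}*\overline{K_{n_1}}$. The only input I need about the factors is that an edgeless graph has zero binomial edge ideal, so the associated quotient is a polynomial ring in $2m$ variables and hence has depth $2m$; thus $\depth(S_i/J_{\overline{K_{n_i}}})=2n_i$ for each part. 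Note that each $\overline{K_{n_i}}$ is disconnected since $n_i\ge 2$, and none of the complete multipartite factors is complete since every part has size at least $2$, so the standing hypotheses of Section 4 are satisfied throughout.

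For the base case $k=2$ I would apply Theorem \ref{depththm3} to $G=\overline{K_{n_1}}*\overline{K_{n_2}}$, a join of two disconnected graphs with $n_2\ge n_1$, obtaining
\[
\depth_S(S/J_G)=\min\{2n_1,\,2n_2,\,n_1+2\}=n_1+2,
\]
where the last equality holds because $n_1\ge 2$ forces $2n_1\ge n_1+2$ and $2n_2\ge 2n_1\ge n_1+2$.

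For the inductive step $k\ge 3$, I peel off the smallest part and write $G=G_1*G_2$ with $G_1=K_{n_2,\dots,n_k}$ and $G_2=\overline{K_{n_1}}$. Since $k-1\ge 2$, the factor $G_1$ is a connected complete multipartite graph, while $G_2$ is disconnected on $n_1$ vertices, so Theorem \ref{depththm1} applies; its third term is the vertex count of the disconnected factor plus two, namely $n_1+2$. Using the induction hypothesis $\depth(S_1/J_{G_1})=n_2+2$ together with $\depth(S_2/J_{G_2})=2n_1$, I obtain
\[
\depth_S(S/J_G)=\min\{n_2+2,\,2n_1,\,n_1+2\}=n_1+2,
\]
the final equality following from $n_1\le n_2$ and $n_1\ge 2$. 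This closes the induction.

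No serious obstacle is expected; the one point requiring care is the bookkeeping of which join theorem applies at each stage. Specifically, one must verify that after removing a single part the remaining complete multipartite graph is still connected---true precisely because at least two parts survive when $k\ge 3$---and that the connected/disconnected roles match the hypotheses of Theorems \ref{depththm1} and \ref{depththm3}. Once this is arranged, the three-term minimum collapses to $n_1+2$ by the immediate inequalities coming from $2\le n_1\le n_2\le\cdots\le n_k$.
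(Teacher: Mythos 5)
Your proof is correct and is exactly the argument the paper intends: the paper states the corollary as an immediate consequence of Theorems \ref{depththm1} and \ref{depththm3}, and your induction via $K_{n_1,\dots,n_k}=\overline{K_{n_1}}*K_{n_2,\dots,n_k}$ with $\depth(S_i/J_{\overline{K_{n_i}}})=2n_i$ just fills in the details the authors left implicit. Your bookkeeping of the connected/disconnected hypotheses and of which vertex count enters the ``$+2$'' term is accurate throughout.
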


\section{Construction of Graph}
In this section, we construct a graph $G$ such that $\reg(S/J_G)=r$ and the number of extremal Betti numbers  of $S/J_G$ is $b$,
where $1 \leq b<r$. We now set some notation for the rest of this section. Let $G_1$ and $G_2$ be two connected  graphs which are 
not complete on the vertex sets $[n_1]$ and $[n_2]$, respectively. Let $p_i=\pd_{S_i}(S_i/J_{G_i})$ and $r_i=\reg(S_i/J_{G_i})$ for $i=1,2$.
By Remark \ref{pd-lower}, $p_i \geq n_i-1$, for $i=1,2$.

\begin{lemma}
	Let $G_1$ and $G_2$ be graphs on $[n_1]$ and $[n_2]$, respectively. Let $G=G_1*G_2$. If $\reg(S/J_G)=2$, then $S/J_G$ admits unique extremal Betti number.
\end{lemma}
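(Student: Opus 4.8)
The plan is to reduce the statement to a single nonvanishing assertion and then read it off from the structural lemmas already proved. By the observation recorded in the introduction, the extremal Betti number of $S/J_G$ is unique precisely when $\beta_{p,p+r}^{S}(S/J_G)\neq 0$, where $p=\pd_S(S/J_G)$ and $r=\reg(S/J_G)$. Since here $r=2$, the whole lemma amounts to showing
\[
\beta_{p,p+2}^{S}\left(\frac{S}{J_G}\right)\neq 0,\qquad p=\pd_S(S/J_G).
\]
So I would spend the proof pinning down exactly which entries can be nonzero in the top homological column.

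First I would record two structural facts about $G=G_1*G_2$. Because every vertex of $G_1$ is adjacent to every vertex of $G_2$, the join $G$ is automatically a connected graph. Moreover $G$ cannot be complete: if it were, then $\reg(S/J_G)=1$ by \cite[Theorem 2.1]{KM1}, contradicting the hypothesis $\reg(S/J_G)=2$. Thus $G$ is connected and not complete, so Lemma \ref{linearstand-binomial} applies and yields $\beta_{p,p+1}^{S}(S/J_G)=0$. This is the key input, since it eliminates the ``linear'' column in homological degree $p$.

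With these in hand the remainder is a short column analysis in homological position $p$. Since $J_G$ is generated in degree $2$, the only nonzero Betti number in the row $j=0$ is $\beta_{0,0}$, and by Remark \ref{pd-lower} we have $p=\pd_S(S/J_G)\geq n_1+n_2-1\geq 1$, whence $\beta_{p,p}^{S}(S/J_G)=0$. The hypothesis $\reg(S/J_G)=2$ forces $\beta_{p,p+j}^{S}(S/J_G)=0$ for every $j\geq 3$, and the previous paragraph disposes of $j=1$. On the other hand, by the definition of projective dimension there is some $j$ with $\beta_{p,p+j}^{S}(S/J_G)\neq 0$. The only column not yet excluded is $j=2$, so $\beta_{p,p+2}^{S}(S/J_G)\neq 0$, and the uniqueness of the extremal Betti number follows from the criterion above.

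I do not anticipate a genuine obstacle in this argument: once one observes that the join is connected (so that Lemma \ref{linearstand-binomial} is available) and not complete (so that the $j=1$ column vanishes in degree $p$), everything reduces to excluding the columns $j\neq 2$ in the top homological degree. The only point that needs care is the bookkeeping ruling out $j=0$ and $j\geq 3$ simultaneously with $j=1$, which is immediate from the degree-$2$ generation of $J_G$ and the regularity hypothesis, respectively; notably the connectivity of $G_1$ and $G_2$ individually plays no role.
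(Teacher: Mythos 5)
Your proof is correct and follows essentially the same route as the paper, whose entire proof is the one-line citation of Lemma \ref{linearstand-binomial}: you have simply made explicit the intended argument, namely that the join is connected but not complete (else $\reg(S/J_G)=1$), so $\beta_{p,p+1}^S(S/J_G)=0$, and with rows $j\geq 3$ killed by $\reg(S/J_G)=2$ this forces $\beta_{p,p+2}^S(S/J_G)\neq 0$, which is the uniqueness criterion stated in the introduction. Your observation that connectivity of $G_1$ and $G_2$ individually is not needed is also accurate.
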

\begin{proof}
	Proof follows from the Lemma \ref{linearstand-binomial}.
\end{proof}
We consider the long exact sequence of Tor corresponding to the exact sequence \eqref{1stseq}

\begin{multline}\label{Tor-1st}
	\cdots \rightarrow \Tor_{k,k+l}^{S}\left( \frac{S}{Q_3},\KK\right)\rightarrow \Tor_{k,k+l}^{S}\left( \frac{S}{P_{\emptyset}(G)},\KK\right)
	\oplus \Tor_{k,k+l}^{S}\left(\frac{S}{Q_2},\KK\right)\\ \rightarrow \Tor_{k,k+l}^{S}\left(\frac{S}{(x_i,y_i:i\in [n_1])+J_{K_{n_2}}},\KK\right) \rightarrow \Tor_{k-1,k+l}^{S}\left( \frac{S}{Q_3},\KK\right)\rightarrow \cdots
\end{multline}
where $Q_2=(x_i,y_i:i\in [n_1])+J_{G_2}$ and $Q_3=P_{\emptyset}(G)\cap Q_2$. 

Now, we prove that  extremal Betti numbers of $S/Q_3$ and $S/Q_2$ coincide in terms of position and value.

\begin{lemma}\label{tech-lemma2}
	Let $G=G_1*G_2$ be the join graph on $[n_1]\sqcup [n_2]$. If $\beta_{k,k+l}^{S_2}(\frac{S_2}{J_{G_2}})$ is an extremal 
	Betti number, then $\beta_{k+2n_1,k+2n_1+l}^S(\frac{S}{Q_3})$ is an extremal Betti number. Moreover, extremal Betti number
	of $S/Q_3$ is of the form $\beta_{k+2n_1,k+2n_1+l}^S(\frac{S}{Q_3})$, where $\beta_{k,k+l}^{S_2}(\frac{S_2}{J_{G_2}})$ is an extremal Betti number.
\end{lemma}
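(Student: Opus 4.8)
The plan is to funnel everything through the short exact sequence \eqref{1stseq} and its long exact Tor sequence \eqref{Tor-1st}, exploiting that the two ``error'' modules appearing there have linear resolutions, so that the interesting (high regularity) part of $S/Q_3$ is inherited from $S/Q_2$, which in turn is a Koszul smear of $S_2/J_{G_2}$.

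First I would record the Betti table of $S/Q_2$. Since $Q_2=(x_i,y_i:i\in[n_1])+J_{G_2}$, the module $S/Q_2$ is $S_2/J_{G_2}$ with the $2n_1$ variables $x_i,y_i$ ($i\in[n_1]$) adjoined and killed, so by \eqref{Bettiproduct} its minimal free resolution is the tensor product of that of $S_2/J_{G_2}$ with the Koszul complex on those variables; concretely $\beta_{a,a+l}^S(S/Q_2)=\sum_t\binom{2n_1}{t}\beta_{a-t,a-t+l}^{S_2}(S_2/J_{G_2})$. From this convolution I would read off $\reg(S/Q_2)=r_2$ and $\pd_S(S/Q_2)=p_2+2n_1$, and the key claim that the extremal Betti numbers of $S/Q_2$ are exactly $\beta_{k+2n_1,k+2n_1+l}^S(S/Q_2)$ for the extremal $\beta_{k,k+l}^{S_2}(S_2/J_{G_2})$: at an outer corner $(k,l)$ only the top Koszul term $t=2n_1$ can survive, and both implications of this claim follow directly from the convolution formula and the definition of ``extremal''.

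Next I would bring in the remaining two modules of \eqref{1stseq}. Both $S/P_{\emptyset}(G)=S/J_{K_{n_1+n_2}}$ and $S/(Q_2+P_{\emptyset}(G))=S/\big((x_i,y_i:i\in[n_1])+J_{K_{n_2}}\big)$ have linear resolutions, i.e. regularity $1$ (the latter again by the Koszul smear), so their graded Tor vanish in every row $l\geq 2$. Feeding this into \eqref{Tor-1st} yields, for each $l\geq 3$, an isomorphism $\Tor_{a,a+l}^S(S/Q_3,\KK)\cong\Tor_{a,a+l}^S(S/Q_2,\KK)$ for all $a$ (both neighbouring terms vanish), whereas for $l=2$ it only gives a surjection $\Tor_{a,a+2}^S(S/Q_3,\KK)\twoheadrightarrow\Tor_{a,a+2}^S(S/Q_2,\KK)$ whose kernel is a quotient of the linear strand $\Tor_{a+1,a+2}^S(S/(Q_2+P_{\emptyset}(G)),\KK)$. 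Thus the Betti tables of $S/Q_3$ and $S/Q_2$ agree in every row $l\geq 3$, and in row $l=2$ the table of $S/Q_3$ can only be larger. Since $\pd_S(S/(Q_2+P_{\emptyset}(G)))=2n_1+n_2-1$, the extra $l=2$ classes are confined to homological degrees $a\leq 2n_1+n_2-2$, which is strictly below $2n_1+p_2=\pd_S(S/Q_2)$ because $p_2\geq n_2-1$ by Remark \ref{pd-lower}; hence the top of every row $l\geq 2$ of $S/Q_3$ still coincides with that of $S/Q_2$.

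Finally I would assemble the correspondence. For a corner with $l\geq 3$, extremality only inspects rows $\geq l\geq 3$ where the two tables agree, so $\beta_{k+2n_1,k+2n_1+l}^S(S/Q_3)$ is extremal iff $\beta_{k+2n_1,k+2n_1+l}^S(S/Q_2)$ is, matching the extremal numbers of $S_2/J_{G_2}$ from the first step; for the unique possible corner with $l=2$, Lemma \ref{linearstand-binomial} places it at the top homological degree $p_2$ of $S_2/J_{G_2}$, hence at $2n_1+p_2$ in $S/Q_2$, a degree the $l=2$ extras cannot reach, so it survives as an extremal Betti number of $S/Q_3$. Reading the two implications together gives the asserted bijection $\beta_{k,k+l}^{S_2}(S_2/J_{G_2})\leftrightarrow\beta_{k+2n_1,k+2n_1+l}^S(S/Q_3)$. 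The hard part is precisely the row $l=2$: there \eqref{Tor-1st} degrades from an isomorphism to a mere surjection and $S/Q_3$ genuinely acquires extra classes from the error term, so the argument hinges on the projective-dimension estimate $2n_1+n_2-2<2n_1+p_2$ confining those extras below the top homological degree, together with Lemma \ref{linearstand-binomial} forcing any $l=2$ extremal class to sit at that top degree, thereby ruling out both the destruction of the genuine corner and the creation of a spurious one.
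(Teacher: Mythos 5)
Your proposal is correct and follows essentially the same route as the paper's proof: the same short exact sequence \eqref{1stseq} and its long exact Tor sequence \eqref{Tor-1st}, the same Koszul-convolution identification \eqref{Bettiproduct} of the Betti table of $S/Q_2$ with the shifted table of $S_2/J_{G_2}$, the same vanishing arguments from the linear resolutions of $S/P_{\emptyset}(G)$ and $S/\big((x_i,y_i:i\in[n_1])+J_{K_{n_2}}\big)$, and the same use of the bound $p_2\geq n_2-1$ (Remark \ref{pd-lower}) together with Lemma \ref{linearstand-binomial} to control the row $l=2$; you merely reorganize it as a global row-by-row table comparison (isomorphism in rows $l\geq 3$, surjection with kernel confined below homological degree $2n_1+p_2$ in row $l=2$) instead of the paper's corner-by-corner argument at the relevant Tor degrees. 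One cosmetic point: your appeal to Lemma \ref{linearstand-binomial} to pin a row-two extremal class of $S/Q_3$ at the top homological degree applies that lemma beyond its literal scope (as does the paper), but your own observation that the tops of all rows $l\geq 2$ of $S/Q_3$ and $S/Q_2$ coincide already supplies the nonvanishing needed to rule out spurious corners, so nothing is missing.
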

\begin{proof}
	It follows from proof of Theorem \ref{depththm2} that $\depth_S(S/Q_3)=\depth_{S_2}(S_2/J_{G_2})$. Thus, $\pd_S(S/Q_3)=p_2 +2n_1$.  Since, $\pd_S(S/P_{\emptyset}(G))=n_1+n_2-1$
	and $\pd_S(S/((x_i,y_i: i \in [n_1])+J_{K_{n_2}}))=2n_1+n_2-1$, by Lemma \ref{linearstand-binomial},
	\[ \Tor_{p_2+2n_1,p_2+2n_1+1}^{S}\left(\frac{S}{Q_2},\KK\right) \simeq \Tor_{p_2,p_2+1}^{S_2}\left( \frac{S_2}{J_{G_2}},\KK\right)=0.\]	
	Also, $\Tor_{p_2+2n_1+1,p_2+2n_1+1}^S(S/((x_i,y_i:i\in [n_1])+J_{K_{n_2}})) =0$. By considering
	the long exact sequence of Tor	\eqref{Tor-1st} in homological degree $p_2 +2n_1$ and graded degree $p_2 +2n_1+1$, we get, $\beta_{p_2+2n_1,p_2+2n_1+1}^S(\frac{S}{Q_3})=0$. Now, let 
	$\beta_{k,k+l}^{S_2}(\frac{S_2}{J_{G_2}})$ is an extremal Betti number. By virtue of Lemma \ref{linearstand-binomial}, $l \geq 2$ and therefore the long exact sequence of 
	Tor \eqref{Tor-1st} in homological degree $p=k+2n_1$ and graded degree $p+l$ is
	\begin{eqnarray*}
		\rightarrow	\Tor_{p+1,p+l}^{S}\left(\frac{S}{(x_i,y_i:i\in [n_1])+J_{K_{n_2}}},\KK\right)\rightarrow \Tor_{p,p+l}^{S}\left( \frac{S}{Q_3},\KK\right)\rightarrow   \Tor_{p,p+l}^{S}\left(\frac{S}{Q_2},\KK\right) \rightarrow 0.
	\end{eqnarray*}
	Now if $l>2$, then we have
	\[\Tor_{p,p+l}^{S}\left( \frac{S}{Q_3},\KK\right)\simeq \Tor_{p,p+l}^{S}\left(\frac{S}{Q_2},\KK\right) \simeq \Tor_{k,k+l}^{S_2}\left( \frac{S_2}{J_{G_2}},\KK\right).\]
	Thus, $\beta_{p,p+l}^S(\frac{S}{Q_3})\neq 0$. Let $(s,t) \neq (p,l)$ with $s \geq p,t \geq l$. Taking homological degree $s\geq p$ and graded degree $s+t \geq p+l$ in \eqref{Tor-1st}, we have
	\[\Tor_{s,s+t}^{S}\left( \frac{S}{Q_3},\KK\right)\simeq \Tor_{s,s+t}^{S}\left(\frac{S}{Q_2},\KK\right) \simeq \Tor_{s-2n_1,s+t-2n_1}^{S_2}\left( \frac{S_2}{J_{G_2}},\KK\right).\] Note that
	$s-2n_1 \geq k, t\geq l$ and $(s-2n_1,t)\neq (k,l)$. Therefore, $$\beta_{s,s+t}^S\left(\frac{S}{Q_3}\right)=\beta_{s-2n_1,s-2n_1+t}^{S_2}\left(\frac{S_2}{J_{G_2}}\right) =0.$$ Hence, 
	$\beta_{p,p+l}^S(\frac{S}{Q_3})$  is an extremal Betti number.
	Now we assume that $l= 2$.  It follows from Lemma \ref{linearstand-binomial} that $k=p_2$.   Note that
	$\pd_S(S/((x_i,y_i: i \in [n_1])+J_{K_{n_2}}))=2n_1+n_2-1 \leq p_2+2n_1$. Now, consider the long exact sequence of Tor \eqref{Tor-1st} in homological degree
	$p=p_2+2n_1$, $\Tor_{p_2+2n_1,p_2+2n_1+j}^{S}\left( \frac{S}{Q_3},\KK\right) \simeq \Tor_{p_2,p_2+j}^{S_2}\left( \frac{S_2}{J_{G_2}},\KK\right),$ for $j \geq 2$. 
	Since,  $\beta_{p_2,p_2+2}^{S_2}(\frac{S_2}{J_{G_2}})$ is an extremal Betti number, $\beta_{p,p+2}^S(\frac{S}{Q_3})$ is an extremal Betti number.
	
	Now, let $\beta_{i,i+j}^S(\frac{S}{Q_3})$ be an extremal Betti number. Consider, the long exact sequence of Tor \eqref{Tor-1st} in homological degree $i$ and graded degree $i+j$.
	If $j>2$, then \[\Tor_{i,i+j}^{S}\left( \frac{S}{Q_3},\KK\right)\simeq \Tor_{i,i+j}^{S}\left(\frac{S}{Q_2},\KK\right) \simeq \Tor_{i-2n_1,i-2n_1+j}^{S_2}\left( \frac{S_2}{J_{G_2}},\KK\right).\]
	Therefore, $\beta_{i-2n_1,i-2n_1+j}^{S_2}( \frac{S_2}{J_{G_2}})\neq 0$. Now, if for some $s \geq i-2n_1, t\geq j$ with $(s,t) \neq (i-2n_1,j)$, $\beta_{s,s+t}^{S_2}(\frac{S_2}{J_{G_2}}) \neq 0$. 
	Then, $\beta_{s+2n_1,s+2n_1+t}^S(\frac{S}{Q_3}) \neq 0$ and $s+2n_1 \geq i, t\geq j$ and $(s+2n_1,t)\neq (i,j)$ which is a contradiction. Hence, $\beta_{i-2n_1,i-2n_1+j}^{S_2}(\frac{S_2}{J_{G_2}})$  
	is an extremal Betti number. If $j=2$, then by Lemma \ref{linearstand-binomial}, $i=p_2+2n_1$. For any $l \geq 2$, we have $\Tor_{p_2+2n_1,p_2+2n_1+l}^{S}\left( \frac{S}{Q_3},\KK\right) \simeq \Tor_{p_2,p_2+l}^{S_2}\left( \frac{S_2}{J_{G_2}},\KK\right)$. 
	Therefore, $\beta_{p_2,p_2+2}^{S_2}( \frac{S_2}{J_{G_2}})$ is an extremal Betti number. This completes the proof.
\end{proof}

It follows from above theorem that $\reg(S/Q_3)=\reg(S/Q_2)=\reg(S_2/J_{G_2})$. 
Assume that $2\leq r_1\leq r_2$ and $p_1\leq p_2$. Since, $G_1$ is a connected graph with $r_1 \geq 2$, by Lemma \ref{linearstand-binomial}, 
if $\beta_{k,k+l}^{S_1}(\frac{S_1}{J_{G_1}})$ is an extremal Betti number, then $l \geq 2$.
We now consider long exact sequence of Tor in homological degree $k$ and graded degree $k+l \geq k+2$ corresponding to the exact sequence \eqref{2ndseq}, 
\begin{align}\label{Tor-2nd}
	\cdots \rightarrow  \Tor_{k,k+l}^S\left(\frac{S}{J_G},\KK \right)\rightarrow \Tor_{k,k+l}^{S}\left( \frac{S}{Q_3},\KK\right) \oplus \Tor_{k,k+l}^{S}\left( \frac{S}{Q_1},\KK\right) \rightarrow  0,
\end{align}
where $Q_1=(x_i,y_i:i\in [n_2])+J_{G_1}$.
\begin{lemma}\label{tech-lemma}
	Let $G=G_1*G_2$ be the join graph on $[n_1]\sqcup [n_2]$.	Suppose $\depth_{S_1}(S_1/J_{G_1})<\depth_{S_2}(S_2/J_{G_2})$ i.e., $p_2+2n_1<p_1+2n_2$. If $\beta_{k,k+l}^S(S/J_G)$ is an extremal
	Betti number, then $$\Tor_{k,k+l}^S\left(\frac{S}{J_G}\right)\simeq \Tor_{k,k+l}^{S}\left(\frac{S}{Q_3}\right)\oplus \Tor_{k,k+l}^{S}\left(
	\frac{S}{Q_1}\right).$$
\end{lemma}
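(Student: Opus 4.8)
The plan is to analyze the long exact sequence of $\Tor$ modules \eqref{Tor-2nd} coming from the short exact sequence \eqref{2ndseq} and show that, under the extremality hypothesis, the connecting map out of $\Tor_{k,k+l}^S(S/J_G)$ into the third term $\Tor_{k-1,k+l}^S\big(S/((x_i,y_i:i\in [n_2])+J_{K_{n_1}})\big)$ vanishes, so that the displayed isomorphism holds. First I would write out the relevant stretch of \eqref{2ndseq}'s long exact sequence without truncating it at $0$ on the right, namely
\begin{align*}
	\cdots \rightarrow  \Tor_{k,k+l}^S\left(\frac{S}{J_G},\KK \right)\rightarrow \Tor_{k,k+l}^{S}\left( \frac{S}{Q_3},\KK\right) \oplus \Tor_{k,k+l}^{S}\left( \frac{S}{Q_1},\KK\right) \rightarrow  \Tor_{k,k+l}^{S}\left(\frac{S}{(x_i,y_i:i\in [n_2])+J_{K_{n_1}}},\KK\right)\rightarrow \cdots
\end{align*}
Since $S/((x_i,y_i:i\in [n_2])+J_{K_{n_1}})$ has a linear resolution over $S$ (it is $J_{K_{n_1}}$ up to extra variables, so its regularity is $1$), its only nonzero graded Betti numbers lie in $\Tor_{m,m+1}^S$. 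Because $\beta_{k,k+l}^S(S/J_G)$ is extremal, Lemma \ref{linearstand-binomial} forces $l\geq 2$, and thus the two flanking terms $\Tor_{k,k+l}^S$ and $\Tor_{k-1,k+l}^S$ of the complete-graph module both vanish (as $l\geq 2$ means $k+l\neq k+1$ and $k+l\neq (k-1)+1$). This kills the maps on both sides of the direct sum, yielding exactly the claimed isomorphism.

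The second ingredient I would establish is that $k$ is precisely the point where this isomorphism is valid, i.e. that extremality at $(k,l)$ for $S/J_G$ really does place us in a range where nothing from the complete-graph term can interfere. Here the hypothesis $p_2+2n_1 < p_1+2n_2$ enters: it identifies $\pd_S(S/Q_1)=p_1+2n_2$ as strictly larger than $\pd_S(S/Q_3)=p_2+2n_1$ (using the computation of $\pd_S(S/Q_3)$ from the proof of Theorem \ref{depththm2} and the fact that $\pd_S(S/Q_1)=\pd_{S_1}(S_1/J_{G_1})+2n_2$). I would use this to argue that the homological degree $k$ of an extremal Betti number of $S/J_G$ cannot be small enough for the linearly-resolved complete-graph term to contribute; more precisely, the only way the third term $\Tor_{k,k+l}^S(\cdots)$ or $\Tor_{k-1,k+l}^S(\cdots)$ could be nonzero is in internal degree $+1$, which $l\geq 2$ already excludes, so the depth inequality is really there to guarantee that extremal Betti numbers of $S/J_G$ are inherited from $S/Q_1$ (the deeper summand) rather than being artifacts, setting up the subsequent lemmas.

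The main obstacle I anticipate is bookkeeping the internal degrees carefully enough to be sure that \emph{both} neighboring terms in the long exact sequence vanish, not just one: the map into $\Tor_{k-1,k+l}^S$ of the complete-graph module governs surjectivity of the middle map, while the incoming map from $\Tor_{k,k+l}^S$ of that module governs injectivity, and I must rule out both. The clean way to do this is to invoke that the complete-graph module is resolved linearly, so $\beta_{i,i+j}=0$ whenever $j\neq 1$; since $l\geq 2$ guarantees $k+l-k=l\geq 2$ and $k+l-(k-1)=l+1\geq 3$, neither relevant graded Betti number of $S/((x_i,y_i:i\in [n_2])+J_{K_{n_1}})$ survives. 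Once this is in place the isomorphism is immediate, and the depth hypothesis $p_2+2n_1<p_1+2n_2$ serves mainly to ensure we are reading extremality in the correct summand, which I would state explicitly to connect this lemma to the construction that follows.
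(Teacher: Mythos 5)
There is a genuine gap: you have read the long exact sequence off by one in the direction that matters. For the sequence \eqref{2ndseq}, $0 \rightarrow S/J_G \rightarrow S/Q_1 \oplus S/Q_3 \rightarrow S/((x_i,y_i:i\in [n_2])+J_{K_{n_1}}) \rightarrow 0$, the stretch around homological degree $k$ reads
\begin{align*}
\Tor_{k+1,k+l}^{S}\left(\frac{S}{(x_i,y_i:i\in [n_2])+J_{K_{n_1}}},\KK\right) &\rightarrow \Tor_{k,k+l}^{S}\left(\frac{S}{J_G},\KK\right) \rightarrow \Tor_{k,k+l}^{S}\left(\frac{S}{Q_1},\KK\right)\oplus\Tor_{k,k+l}^{S}\left(\frac{S}{Q_3},\KK\right)\\ &\rightarrow \Tor_{k,k+l}^{S}\left(\frac{S}{(x_i,y_i:i\in [n_2])+J_{K_{n_1}}},\KK\right),
\end{align*}
so injectivity of the middle map is governed by $\Tor_{k+1,k+l}^S$ of the complete-graph module, not by $\Tor_{k-1,k+l}^S$ as you assert ($\Tor_{k-1,k+l}^S$ lies past the connecting map and is irrelevant to the isomorphism at $k$). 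The internal-degree shift of the injectivity-controlling term is $(k+l)-(k+1)=l-1$, and since $S/((x_i,y_i:i\in [n_2])+J_{K_{n_1}})$ has a linear resolution, its linear-strand Betti numbers $\beta_{m,m+1}$ are \emph{nonzero} for all $1\leq m\leq n_1+2n_2-1=\pd$. Hence when $l=2$ your blanket claim that ``$l\geq 2$ already excludes internal degree $+1$'' is false: $\Tor_{k+1,k+2}^S$ of that module sits exactly on its linear strand and is nonzero whenever $k+1\leq n_1+2n_2-1$, so your argument establishes surjectivity but not injectivity in precisely this case.

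This is also why the hypothesis $p_2+2n_1<p_1+2n_2$ is not, as you suggest, merely there ``to ensure we are reading extremality in the correct summand.'' The paper's proof splits into cases: for $l>2$ both relevant Tor modules vanish by regularity, exactly as you argue; but for $l=2$, Lemma \ref{linearstand-binomial} forces $k=\pd_S(S/J_G)$, and the depth hypothesis together with Theorem \ref{depththm2} and the Auslander--Buchsbaum formula gives $\pd_S(S/J_G)=p_1+2n_2$, whence $k+1=p_1+2n_2+1>n_1+2n_2-1$ by Remark \ref{pd-lower} (as $p_1\geq n_1-1$). The problematic linear-strand term then dies because its homological degree exceeds the projective dimension of the complete-graph module, not because of any internal-degree constraint. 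You gesture toward this in your second paragraph (noting $\pd_S(S/Q_1)=p_1+2n_2$), but you never carry out the case $l=2$, and without it the lemma is unproved exactly when the extremal Betti number lies in the $j=2$ row --- the situation Lemma \ref{linearstand-binomial} singles out and which genuinely occurs in the construction of Theorem \ref{exist}.
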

\begin{proof}
	Note that $\pd_S(S/((x_i,y_i:i\in [n_2])+J_{K_{n_1}}))=2n_2+n_1-1< 2n_2+p_1+1,$ by Remark \ref{pd-lower}. Therefore,
	$\Tor_{p_1+2n_2+1,p_1+2n_2+1+j}^S\left(\frac{S}{(x_i,y_i:i\in [n_2])+J_{K_{n_1}}}, \KK\right)=0$, for $j \geq 1$. If $l=2$, by Lemma \ref{linearstand-binomial}, $k=\pd_S(S/J_G)=p_1+2n_2$. Also if $l>2$, then 
	$$\Tor_{k,k+l}^S\left(\frac{S}{(x_i,y_i:i\in [n_2])+J_{K_{n_1}}}, \KK\right)=\Tor_{k+1,k+l}^S\left(\frac{S}{(x_i,y_i:i\in [n_2])+J_{K_{n_1}}}, \KK\right)=0.$$
	Hence, by \eqref{Tor-2nd} $$\Tor_{k,k+l}^S\left(\frac{S}{J_G}\right)\simeq \Tor_{k,k+l}^S\left(\frac{S}{Q_3}\right)\oplus \Tor_{k,k+l}^S\left(
	\frac{S}{Q_1}\right).$$
\end{proof}

By \cite[Theorem 2.1]{KM1}, $\reg(S/J_{K_n}) =1$ and hence $S/J_{K_n}$ admits unique extremal Betti number. So for $r=b=1$, consider $G=K_n$. 
Now assume that $r\geq 2$. It follows from Betti diagram and Lemmas \ref{linearstand-binomial}, \ref{linearstand-binomial2} that $b\leq r-1$. 
\begin{theorem}\label{exist}
	Let $r$ and $b$ be two positive integers with $1\leq b\leq r-1$. Then there exists a graph $G=G_{r,b}$ such that $\reg(S/J_G)=r$ and 
	the number of extremal Betti numbers of $S/J_{G}$ is $b$. 
\end{theorem}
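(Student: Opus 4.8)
The plan is to realize $G_{r,b}$ as an iterated join of cycles, each cycle contributing exactly one extremal Betti number at its own regularity level. For $1\le i\le b$ put $D_i:=C_{r-i+3}$, so that $D_1=C_{r+2}$ has regularity $r$ and $D_b=C_{r-b+3}$ has regularity $r-b+1$; since $b\le r-1$, every $D_i$ has at least $r-b+3\ge 4$ vertices, hence is connected and not complete. I set $G_{r,b}:=D_b*D_{b-1}*\cdots*D_1$ and analyze it through the partial joins $\Gamma_1:=D_1$ and $\Gamma_k:=D_k*\Gamma_{k-1}$. The hypothesis $b\le r-1$ is used here precisely to force the $b$ distinct target levels $r,r-1,\dots,r-b+1$ into the admissible range $[2,r]$.

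I first record the relevant facts about cycles. For $m\ge 4$ one has $\reg(S/J_{C_m})=m-2$ and $\depth_S(S/J_{C_m})=m$ by \cite[Corollary 16]{SZ}, and $S/J_{C_m}$ has a \emph{unique} extremal Betti number. The last claim follows from Corollary \ref{Betti-wheel}: the Betti diagram of $W_m=v*C_m$ has its corner entry $\beta_{m+2,2m}\neq 0$ as its only extremal Betti number, which by Proposition \ref{extremal-cone} is the image of the unique extremal Betti number of $C_m$, located at regularity $m-2$. Thus $D_i$ is connected and non-complete, with regularity $r-i+1$, depth $r-i+3$, and a single extremal Betti number.

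I then prove by induction on $k$ that $\Gamma_k$ is connected, non-complete, satisfies $\reg(S/J_{\Gamma_k})=r$, and has exactly $k$ extremal Betti numbers, one at each level $r,r-1,\dots,r-k+1$, arranged so that higher regularity occurs at strictly smaller homological degree. The base $k=1$ is immediate. For the step, I apply the join apparatus with $G_1=D_k$ and $G_2=\Gamma_{k-1}$, so $r_1=r-k+1$ and $r_2=r$, giving $2\le r_1\le r_2$, and a direct count gives $p_1\le p_2$, matching the standing hypotheses of the section. The depth inequality of Lemma \ref{tech-lemma} holds because, by Theorem \ref{depththm2}, the depth of a join of connected graphs is the minimum of the depths; hence $\depth_S(S/J_{\Gamma_{k-1}})=\min_{i\le k-1}(r-i+3)=r-k+4$, while $\depth(D_k)=r-k+3$, so $\depth(D_k)<\depth(\Gamma_{k-1})$, i.e.\ $p_2+2n_1<p_1+2n_2$. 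By Lemma \ref{tech-lemma2}, $S/Q_3$ carries exactly the $k-1$ extremal Betti numbers of $\Gamma_{k-1}$ (same values, shifted by $2n_1$), all at levels $\ge r-k+2>r_1$, while $S/Q_1$ carries the single extremal Betti number of $D_k$, shifted by $2n_2$ to the top homological degree $\pd_S(S/Q_1)=\pd_S(S/J_{\Gamma_k})$ and at level $r_1$; and $\reg(S/J_{\Gamma_k})=r$ follows from \eqref{2ndseq} together with $\reg(S/Q_3)=r$ and the regularity $1$ of the third term.

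The crux, and the step I expect to be the main obstacle, is to verify that the extremal Betti numbers of $\Gamma_k$ are \emph{exactly} the union of those of $S/Q_1$ and $S/Q_3$, with no interference and no extras. Lemma \ref{tech-lemma} gives one inclusion: at any extremal position $(s,s+t)$ of $S/J_{\Gamma_k}$ we have $\Tor_{s,s+t}^S(S/J_{\Gamma_k})\simeq \Tor_{s,s+t}^S(S/Q_3)\oplus \Tor_{s,s+t}^S(S/Q_1)$, so each extremal Betti number of $\Gamma_k$ is nonzero in one summand, and (as each summand carries its extremal Betti numbers as the maximal entries of its own staircase) it must occupy an extremal position of that summand. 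For the reverse inclusion I use the long exact sequence of \eqref{2ndseq}: the quotient term has regularity $1$ and projective dimension $n_1+2n_2-1\le \pd_S(S/Q_1)$, which forces the candidate positions to inherit the Betti number of the relevant summand and their upper-right regions to be void; thus the $S/Q_1$ entry (at the top homological degree $p_1+2n_2>p_2+2n_1=\pd_S(S/Q_3)$ and least regularity $r_1$) and each of the $k-1$ entries of $S/Q_3$ (at regularities $>r_1$) persist as nonzero extremal Betti numbers of $S/J_{\Gamma_k}$. Since the level $r_1$ is strictly below every level coming from $S/Q_3$, the two families are disjoint and interleave into a single staircase of length $k$. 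Taking $k=b$ yields $G_{r,b}=\Gamma_b$ with $\reg(S/J_{G_{r,b}})=r$ and exactly $b$ extremal Betti numbers, as desired.
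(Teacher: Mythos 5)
Your proposal is correct and follows the paper's proof essentially verbatim: the paper takes $G_{r,b}=P_{r-b+2}*\cdots*P_{r+1}$ and runs the same induction on the number of join factors, using Theorem \ref{depththm2} for the depth arithmetic, Lemma \ref{tech-lemma2} to transport the inductive extremal Betti numbers into $S/Q_3$, Lemma \ref{tech-lemma} together with the regularity-$1$ and projective-dimension bounds on the quotient term of \eqref{2ndseq} for the two inclusions, and the same level-separation argument placing the new extremal Betti number at level $r_1=r-b+1$ strictly below those of $S/Q_3$. Your only deviation is the choice of building blocks --- cycles $C_{r-i+3}$ in place of the paper's paths $P_{r+2-i}$ --- which is harmless since both families have depth $r-i+3$, regularity $r-i+1$, and a unique extremal Betti number (for cycles you correctly extract uniqueness from \cite[Corollary 16]{SZ} via Corollary \ref{Betti-wheel} and Proposition \ref{extremal-cone}, whereas the paper's paths are complete intersections by \cite[Corollary 1.2]{her1}, which makes the base case and the uniqueness immediate and moreover pins every extremal Betti number of $S/J_{G_{r,b}}$ to the value $1$ at the consecutive positions $\beta^S_{p-i,p+r-b+1}$, $0\leq i\leq b-1$, a sharper conclusion than the bare count your construction yields).
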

\begin{proof}
	Take $G=G_{r,b}=P_{r-b+2}*\cdots *P_{r+1}$. Note that $n=|V(G)|=br-\frac{b(b-3)}{2}$ and by recursively applying Theorem \ref{depththm2}, $\depth_S(S/J_G)=r-b+3$. Now, by Auslander-Buchsbaum 
	formula, $p =\pd_S(S/J_G)=(2b-1)r-(b-1)(b-3)$. It follows from \cite[Theorem 2.1]{KMJA} that $\reg(S/J_{G})=r.$ We now prove that extremal Betti numbers of $S/J_G$ are precisely 
	$\beta^{S}_{p-i,p+r-b+1} \left(\frac{S}{J_{G}}\right)=1$, for $0\leq i\leq b-1$. 
	We proceed by induction on  $b$. If $b=1$, then $G=G_{r,1}=P_{r+1}$. By \cite[Corollary 1.2]{her1}, $J_{G}$ is complete intersection ideal, $S/J_{G}$ has a unique 
	extremal Betti number, $\beta_{r,2r}^S \left( \frac{S}{J_{G}}\right)=1$. Now, assume that $b>1$ and extremal Betti numbers of $S_2/J_{G_2}$ are precisely  
	$\beta^{S_2}_{p_2-i,p_2+r-b+2} \left(\frac{S_2}{J_{G_{2}}}\right)$ for $0\leq i\leq b-2$, where $G_2=G_{r,b-1}=P_{r-b+3}*\cdots *P_{r+1}$, $S_2=\mathbb{K}[x_j,y_j : j \in V(G_2)]$ and $p_2=\pd_{S_2}(S_2/J_{G_2})$. 
	Observe that $n_2 =|V(G_2)|=(b-1)r-\frac{(b-1)(b-4)}{2}$. Also, by Theorem \ref{depththm2}, $\depth_{S_2}(S_2/J_{G_2})=r-b+4$. Thus, by Auslander-Buchsbaum formula, $p_2=2n_2-r+b-4=(2b-3)r-(b-2)(b-4).$ 
	Set $G_1=P_{r-b+2}$, $n_1=|V(G_1)|=r-b+2$ and $S_1=\mathbb{K}[x_j,y_j : j \in V(G_1)]$. By \cite[Corollary 1.2]{her1}, $p_1=\pd_{S_{1}} ({S_{1}}/{J_{G_1}})=r-b+1=r_1=\reg(S_1/J_{G_1})$. Note that $G=G_{r,b}=G_1*G_2$.
	Since $J_{G_1}$ is complete intersection ideal, $\beta^{S_{1}}_{p_1,p_1+r-b+1}\left( \frac{S_{1}}{J_{G_1}}\right)=1$ is the extremal Betti number of $S_{1}/J_{G_1}$. 
	Therefore, $\beta^S_{p_1+2n_2,p_1+2n_2+r-b+1}\left(\frac{S}{Q_1}\right)$ is unique extremal Betti number of $S/Q_1$. Note that $p_1+2n_2=(2b-1)r-(b-1)(b-3)=p$.
	
	Let $\beta_{k,k+l}^S(S/J_G)$ is an extremal Betti number. Now, by Lemma \ref{tech-lemma}, we have 
	$$\Tor_{k,k+l}^S\left(\frac{S}{J_G}\right)\simeq \Tor_{k,k+l}^{S}\left(\frac{S}{Q_3}\right)\oplus \Tor_{k,k+l}^{S}\left(
	\frac{S}{Q_1}\right).$$
	It follows from Lemma \ref{tech-lemma2} that  extremal Betti numbers of $S/Q_3$ are   $\beta^S_{p_2+2n_1-i,p_2+2n_1+r-b+2}\left(\frac{S}{Q_3}\right)$,
	for $0\leq i\leq b-2$. Note that $p_2+2n_1=p_2+2(r-b+2)=p-1$. Therefore,  extremal Betti numbers of $S/Q_3$ are 
	$\beta^S_{p-i-1,p+r-b+1}\left(\frac{S}{Q_3}\right)=1$, for $0\leq i \leq b-2$.  Since $b\leq r-1$, $r-b+1\geq 2$. So, for $j\geq r-b+2\geq 3$ and $1 \leq k \leq p$, 
	$$\Tor_{k,k+j}^S\left(\frac{S}{J_G}\right)\simeq \Tor_{k,k+j}^S\left(\frac{S}{Q_3}\right).$$
	Therefore, $\beta_{p-i-1,p+r-b+1}^S\left(\frac{S}{J_G}\right) =1$, for $0 \leq i \leq b-2$ are  extremal Betti numbers of $S/J_G$. Now, it remains to prove that $\beta_{p,p+r-b+1}^S\left(\frac{S}{J_G}\right)$ is an extremal Betti number.

	Taking long exact sequence of Tor \eqref{Tor-1st} in homological degree $p$ and graded degree $p +j=p +r-b+1\geq p +2$, we have 
	$\Tor^S_{p,p+r-b+1}\left(\frac{S}{J_G}\right)\simeq \Tor^S_{p,p+r-b+1}\left(\frac{S}{Q_1}\right)\simeq \Tor^{S_1}_{p_1,p_1+r-b+1}\left(\frac{S_1}{J_{G_1}}\right)$.
	Therefore, $\beta^S_{p,p+r-b+1}\left(\frac{S}{J_G}\right)=1$ is also an extremal Betti number. Hence the number of extremal Betti numbers of $S/J_G$ is $b$ and the extremal Betti
	number are of the form $\beta^{S}_{p-i,p+r-b+1} \left(\frac{S}{J_{G}}\right)=1$, for $0\leq i\leq b-1$.
\end{proof}
Observe that the projective dimension of $S/J_{G_{r,b}}$ is very large. Hence the following  question arises:
\begin{question}
	Does there exist a graph $G$ such that the projective dimension is bounded by a linear function of $b$ and $r$, where $r=\reg(S/J_G)$ and $b$ is the number of extremal Betti numbers of $S/J_G$?
\end{question}
\vskip 2mm
\noindent
\textbf{Acknowledgement:} The authors are grateful to their advisor A. V. Jayanthan for
constant support, valuable ideas and suggestions. The first author thanks the National Board
for Higher Mathematics, India for the financial support. The second author thanks 
University Grants Commission, Government of India for the financial support.
\bibliographystyle{plain}
\bibliography{biblog}

\begin{thebibliography}{10}

\bibitem{AN2017}
Arindam Banerjee and Luis N\'{u}\~{n}ez Betancourt.
\newblock Graph connectivity and binomial edge ideals.
\newblock {\em Proc. Amer. Math. Soc.}, 145(2):487--499, 2017.

\bibitem{Alba-MathNatch}
Hern\'{a}n de~Alba and Do~Trong Hoang.
\newblock On the extremal {B}etti numbers of the binomial edge ideal of closed
  graphs.
\newblock {\em Math. Nachr.}, 291(1):28--40, 2018.

\bibitem{her1}
Viviana Ene, J\"urgen Herzog, and Takayuki Hibi.
\newblock Cohen-{M}acaulay binomial edge ideals.
\newblock {\em Nagoya Math. J.}, 204:57--68, 2011.

\bibitem{EZ}
Viviana Ene and Andrei Zarojanu.
\newblock On the regularity of binomial edge ideals.
\newblock {\em Math. Nachr.}, 288(1):19--24, 2015.

\bibitem{HH1}
J\"urgen Herzog, Takayuki Hibi, Freyja Hreinsd\'ottir, Thomas Kahle, and
  Johannes Rauh.
\newblock Binomial edge ideals and conditional independence statements.
\newblock {\em Adv. in Appl. Math.}, 45(3):317--333, 2010.

\bibitem{HKS}
J\"{u}rgen Herzog, Dariush Kiani, and Sara Saeedi~Madani.
\newblock The linear strand of determinantal facet ideals.
\newblock {\em Michigan Math. J.}, 66(1):107--123, 2017.

\bibitem{HR-Extremal}
J\"{u}rgen Herzog and Giancarlo Rinaldo.
\newblock On the extremal {B}etti numbers of binomial edge ideals of block
  graphs.
\newblock {\em Electron. J. Combin.}, 25(1):Paper 1.63, 10, 2018.

\bibitem{HM-Extremal}
Takayuki Hibi, Kyouko Kimura, and Kazunori Matsuda.
\newblock Extremal {B}etti numbers of edge ideals.
\newblock {\em Arch. Math. (Basel)}, 113(2):149--155, 2019.

\bibitem{HibiM2018}
Takayuki {Hibi} and Kazunori {Matsuda}.
\newblock {Regularity and h-polynomials of binomial edge idals}.
\newblock {\em ArXiv e-prints}, August 2018.

\bibitem{HMmono}
Takayuki Hibi and Kazunori Matsuda.
\newblock Regularity and {$h$}-polynomials of monomial ideals.
\newblock {\em Math. Nachr.}, 291(16):2427--2434, 2018.

\bibitem{JACM}
A.~V. Jayanthan and Arvind Kumar.
\newblock Regularity of binomial edge ideals of {C}ohen-{M}acaulay bipartite
  graphs.
\newblock {\em Comm. Algebra}, 47(11):4797--4805, 2019.

\bibitem{JAR}
A.~V. {Jayanthan}, Arvind {Kumar}, and Rajib {Sarkar}.
\newblock {Almost complete intersection binomial edge ideals and their Rees
  algebras}.
\newblock {\em arXiv e-prints}, page arXiv:1904.04499, Apr 2019.

\bibitem{KM4}
Dariush Kiani and Sara Saeedi~Madani.
\newblock Binomial edge ideals with pure resolutions.
\newblock {\em Collect. Math.}, 65(3):331--340, 2014.

\bibitem{KM6}
Dariush Kiani and Sara Saeedi~Madani.
\newblock Some {C}ohen-{M}acaulay and unmixed binomial edge ideals.
\newblock {\em Comm. Algebra}, 43(12):5434--5453, 2015.

\bibitem{AR1}
Arvind Kumar and Rajib Sarkar.
\newblock Hilbert series of binomial edge ideals.
\newblock {\em Comm. Algebra}, 47(9):3830--3841, 2019.

\bibitem{ger1}
C.~{Mascia} and G.~{Rinaldo}.
\newblock {Krull dimension and regularity of binomial edge ideals of block
  graphs}.
\newblock {\em J. Algebra Appl. To Appear}, 2019.

\bibitem{MM}
Kazunori Matsuda and Satoshi Murai.
\newblock Regularity bounds for binomial edge ideals.
\newblock {\em J. Commut. Algebra}, 5(1):141--149, 2013.

\bibitem{FM}
Fatemeh Mohammadi and Leila Sharifan.
\newblock Hilbert function of binomial edge ideals.
\newblock {\em Comm. Algebra}, 42(2):688--703, 2014.

\bibitem{oh}
Masahiro Ohtani.
\newblock Graphs and ideals generated by some 2-minors.
\newblock {\em Comm. Algebra}, 39(3):905--917, 2011.

\bibitem{RKM}
M.~{Rouzbahani Malayeri}, S.~{Saeedi Madani}, and D.~{Kiani}.
\newblock {Regularity of binomial edge ideals of chordal graphs}.
\newblock {\em arXiv e-prints}, page arXiv:1810.03119, Oct 2018.

\bibitem{KM1}
Sara Saeedi~Madani and Dariush Kiani.
\newblock Binomial edge ideals of graphs.
\newblock {\em Electron. J. Combin.}, 19(2):Paper 44, 6, 2012.

\bibitem{KMJA}
Sara Saeedi~Madani and Dariush Kiani.
\newblock Binomial edge ideals of regularity 3.
\newblock {\em J. Algebra}, 515:157--172, 2018.

\bibitem{Schenzel}
Peter Schenzel and Sohail Zafar.
\newblock Algebraic properties of the binomial edge ideal of a complete
  bipartite graph.
\newblock {\em An. \c Stiin\c t. Univ. ``Ovidius'' Constan\c ta Ser. Mat.},
  22(2):217--237, 2014.

\bibitem{SZ}
Sohail Zafar and Zohaib Zahid.
\newblock On the {B}etti numbers of some classes of binomial edge ideals.
\newblock {\em Electron. J. Combin.}, 20(4):Paper 37, 14, 2013.

\end{thebibliography}
\end{document}